\documentclass[12pt,reqno]{amsart}

\advance \topmargin by -\headheight
\advance \topmargin by -\headsep
\evensidemargin \oddsidemargin
\usepackage{amsmath}
\usepackage{bbm}
\usepackage{amsfonts}
\usepackage{amssymb}
\usepackage{amsthm}
\usepackage{csquotes}
\usepackage{color}
\usepackage{mathrsfs}
\usepackage{dsfont}
\usepackage{bm}
\usepackage{cite}
\usepackage{enumerate}
\usepackage{booktabs}
\usepackage[mathscr]{eucal}
\usepackage{mathtools}
\usepackage{url}

\numberwithin{equation}{section}
\newtheorem{theorem}{Theorem}[section]
\newtheorem{proposition}[theorem]{Proposition}
\newtheorem{lemma}[theorem]{Lemma}
\newtheorem{corollary}[theorem]{Corollary}
\theoremstyle{definition}

\theoremstyle{remark}

\newcommand{\T}{\mathbb T}
\newcommand{\Z}{\mathbb Z}
\newcommand{\R}{\mathbb R}
\newcommand{\N}{\mathbb N}
\newcommand{\ML}{\mathrm{ML}}
\newcommand{\dd}{\,\mathrm{d}}
\title{Near optimal gap in the Lonely Runner Conjecture}
\author{Xiyu Hu}
\address{School of Mathematical Sciences, University of Chinese Academy of Sciences, China}
\email{hxypqr@gmail.com}
\author{Yifan Jing}
\address{Department of Mathematics, The Ohio State University, Columbus OH, 43210, USA}
\email{jing.245@osu.edu}
\subjclass[2020]{Primary 11K60; Secondary 11J13, 11A05}
\keywords{Lonely runner conjecture, Diophantine approximation,
Bohr sets, GCD sums, spillover argument}

\begin{document}
\begin{abstract}
For a set $V$ of $n$ distinct positive integers, let $\ML(V)=\max_{t\in\R/\Z}\min_{v\in V}\|vt\|$, where $\|x\|$ is the distance from $x$ to the nearest integer. We prove that there is an absolute constant $c>0$ such that for every $n\ge2$ and every $V$,
\[
\ML(V)>\frac{1+c}{2n}.
\]
The proof combines a refinement of the large GCD argument of Green and Walker with a new physical spillover inequality for interval multiplicities.
\end{abstract}
\maketitle

\tableofcontents

\section{Introduction}

The lonely runner conjecture has its origins in the study of Diophantine approximation, dating back to the 1965 doctoral thesis of Wills~\cite{Wills}. As Wills later recalled, his research was motivated by Hermann Weyl's equidistribution theorem of 1916~\cite{Weyl}, this led him to investigate the opposite extremal question of worst Diophantine approximation. Cusick~\cite{Cusick} subsequently gave an equivalent formulation in terms of view-obstruction problems after discussions with Wills at the ICM congress in Nice(Nizza) in 1970. Its name \emph{Lonely Runner Conjecture} comes from a reformulation by Goddyn~\cite{BieniaEtAl} as follows: $n+1$ runners move around a circle of unit length at distinct constant speeds, starting at the same point. The conjecture asserts that each runner is, at some time, at distance at least $1/(n+1)$ from every other runner. The problem has received many attentions in the literature, with connections to many other directions including Diophantine approximation, view-obstruction, billiard trajectory problems, chromatic numbers of distance graphs, and flows in matroids. For background of these directions and references we refer the reader to a recent survey of Perarnau and Serra \cite{PSsurvey}. 
The conjecture has also led to several related questions and variations. These include the possible values of maximum loneliness~\cite{Kravitz}, the lonely runner spectra~\cite{{GiriKravitz,JainKravitz}} and geometric formulations using lattice zonotopes~\cite{{BeckSchymura,MalikiosisSantosSchymura}}. More recently, Alon, Buci\'c and Davies~\cite{AlonBucicDavies} proved a higher-dimensional simultaneous avoidance theorem for affine linear forms, which settles Schoenberg's view-obstruction conjecture, which can be seen as a high dimensional analogy of lonely runner problem, but it does not directly connect to the original conjecture. 

Among different lines of research around the conjecture, a central question is to improve the lower bound uniformly over all speed sets. It is the question that we address in the present paper.

We work with the usual integer formulation. We use $\T=\R/\Z$ for one-dimensional torus, and let $\mu$ be the normalized Haar measure on $\T$, and let $\|x\|=\mathrm{dist}(x,\Z)$ for $x\in\T$. For a nonempty finite set $V=\{v_1,\ldots,v_n\}\subset\N$, following the notation of Kravitz~\cite{Kravitz} and Bedert~\cite{Bedert}, we define its \emph{maximum loneliness} by
\[
 \ML(V)=\max_{t\in\T}\min_{v\in V}\|vt\|.
\]
Here $n=|V|$ counts the nonzero relative speeds, so the conjectured bound is
\[
\ML(V)\ge\frac1{n+1}.
\]
This formulation is equivalent to the original conjecture for real speeds; see~\cite[Section~4]{BohmanHolzmanKleitman}. The bound would be sharp, since $\ML(\{1,\ldots,n\})=1/(n+1)$.

For general $n$, a simple union bound gives $\ML(V)\ge1/2n$. Indeed, for $v\in V$ and $0<\delta<1/2$, consider the Bohr set
\[
 B(v;\delta)=\{t\in\T:\|vt\|\le\delta\}.
\]
It has measure $2\delta$. Thus, if $2n\delta<1$, the sets $B(v;\delta)$ cannot cover $\T$, and any point outside their union would have distance greater than $\delta$ from every moving runner.
By letting $\delta$ tend to $1/2n$, we  prove the bound. 

For many decades, the known improvements over $1/(2n)$ were only of order $n^{-2}$. These include the work of Chen~\cite{Chen} and Chen and Cusick~\cite{ChenCusick}, the latter under a primality assumption. Perarnau and Serra~\cite{PerarnauSerra} subsequently proved
\[
 \ML(V)\ge\frac1{2n-2+o(1)}.
\]
Thus these estimates changed the denominator by a bounded amount, while leaving its leading term unchanged.

Tao~\cite{Tao} was the first to obtain an improvement of larger order. He showed that, for all sufficiently large $n$,
\[
 \ML(V)\ge\frac1{2n}
       +c\frac{\log n}{n^2(\log\log n)^2}
\]
for an absolute constant $c>0$. More recently, Bedert~\cite{Bedert} obtained the power-saving improvement
\[
 \ML(V)\ge\frac1{2n}+n^{-5/3-o(1)}.
\]
His proof separates speed sets according to their additive dimension. In large dimension he uses Riesz products, while in small dimension he constructs a model with controlled maximum speed. In subsequent work, Bedert and the first author~\cite{BH26} further improve this bound. Bedert~\cite{B3/2} has also communicated the improvement
\[
 \ML(V)\ge\frac1{2n}+n^{-3/2-o(1)}.
\]
All these bounds still give a relative improvement tending to zero as $n$ tends to infinity.

The purpose of this paper is to prove a constant relative improvement over the union bound. 

\begin{theorem}\label{thm:main}
There is an absolute constant $c>0$ such that, for every set $V$ of $n\ge2$ distinct positive integers,
\[
  \ML(V)>\frac{1+c}{2n}.
\]
\end{theorem}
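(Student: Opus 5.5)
We argue by contradiction. Fix a small absolute $c>0$, put $\delta=(1+c)/(2n)$, and suppose the closed Bohr sets $B(v;\delta)$, $v\in V$, cover $\T$. Their total measure is $\sum_v 2\delta=1+c$, so the covering wastes at most $c$ in measure. Write $f(t)=\sum_{v\in V}\mathbf 1_{B(v;\delta)}(t)$ for the multiplicity function. Then $f\ge1$ everywhere and $\int_\T f\,\dd\mu=1+c$, so
\[
\int_\T (f-1)\,\dd\mu = c .
\]
The strategy is to show that any covering forces overlap of measure $\ge c_0$ for an absolute $c_0>0$. Choosing $c<c_0$ then gives the contradiction. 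The overlap measure is
\[
\int (f-1)_+\,\dd\mu \;\ge\; \tfrac12\sum_{v\ne w}\mu\bigl(B(v;\delta)\cap B(w;\delta)\bigr) - (\text{higher-order corrections}).
\]
It is cleaner to bound $\int f^2-\int f$ directly and then use $f\ge1$.

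\textbf{Step 1 (GCD structure).} The pairwise intersection has an essentially exact formula:
\[
\mu\bigl(B(v;\delta)\cap B(w;\delta)\bigr)\approx 4\delta^2+\Theta\!\left(\frac{\gcd(v,w)^2}{vw}\right)\cdot(\text{correction}).
\]
More precisely, near $t=a/g$ with $g=\gcd(v,w)$, the two intervals of length $2\delta/v$ and $2\delta/w$ are centered at the same point and nest. This gives an excess of order $\delta\min(1/v,1/w)\cdot g$ over independence.

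Following Green--Walker, a GCD-sum estimate (Koukoulopoulos--Maynard type / Bondarenko--Seip bounds on $\sum_{v,w}\gcd(v,w)/\sqrt{vw}$) shows the following dichotomy. Either the total pairwise overlap already exceeds $c_0$, and we finish by inclusion--exclusion once triple overlaps are controlled. Or the set $V$ has little GCD structure, meaning most pairs are nearly independent at scale $\delta$.

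In the near-independent regime the overlaps behave like random intervals. Random placement of $n$ sets of measure $1/n$ each leaves an uncovered fraction near $e^{-1}$, far from a cover. To make this rigorous we compute $\int f^2$ and apply Cauchy--Schwarz:
\[
\mu(\operatorname{supp}f)\le \frac{(\int f)^2}{\int f^2}.
\]
When $\int f^2\ge (1+c)^2+c_1$, the support has measure $<1$, contradicting the cover. The refinement of Green--Walker consists in handling the intermediate regime, where a positive proportion of pairs have large GCD. There the intersections are nested, and nesting leaves the covering deficient elsewhere.

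\textbf{Step 2 (spillover).} This is where I would introduce a "spillover" inequality. Each $B(v;\delta)$ is a union of $v$ intervals of length $2\delta/v$ centered at $a/v$. Suppose a point $a/g$ is common to many progressions $\{a/v\}$. The nested intervals there contribute multiplicity $k$ but cover only the longest interval. The shorter ones cannot spill over to cover the adjacent region.

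I would formalize this as follows: for each interval $I$ of $B(v;\delta)$, the measure of $I\cap\{f=1\}$ is at least its length minus the length covered by intervals of other sets. One then sums a charging scheme showing that the wasted measure is at least a constant times the proportion of mass in large-GCD clusters.

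\textbf{Main obstacle.} The main obstacle is making Steps 1 and 2 meet with uniform constants. The Cauchy--Schwarz bound needs pairwise correlations to be controlled on average. Spillover must absorb exactly the large-GCD configurations where averaging fails, without losing factors like $\log n$ from the GCD sums. I would first try to split $V$ dyadically by size of the speeds. I would then treat $V$ with a large common divisor structure by rescaling, since $\ML(dV)=\ML(V)$, reducing to $\gcd(V)=1$. Small $n$ is handled by the known $1/(2n-2+o(1))$ bound of Perarnau--Serra, adjusting $c$.
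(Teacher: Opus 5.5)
Your proposal has two genuine gaps, one in each regime, and the key idea for each is missing.

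\textbf{The near-independent regime.} Your Cauchy--Schwarz step is reversed. Writing $\int f=\int f\mathbf 1_{\operatorname{supp}f}$ gives $\mu(\operatorname{supp}f)\ge(\int f)^2/\int f^2$, which is a lower bound on the covered measure, so it can never contradict a cover. More fundamentally, no control of pairwise intersections at a single radius can force waste of constant size. Suppose $f=1$ except on a set of measure $c/(r-1)$, where $f=r\approx1/c$. Then $\int(f-1)_+\dd\mu=c$, while $\int f(f-1)\dd\mu\approx1$. That second value is exactly what near-independence, $\sum_{v\neq w}\mu(B(v;\delta)\cap B(w;\delta))\approx4\delta^2n(n-1)$, predicts. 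Likewise, ``inclusion--exclusion once triple overlaps are controlled'' is the whole difficulty, not a step.

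The missing idea is to compare two radii. For intervals dilated about fixed centers, $\frac1s\int(N_s-k)_+\dd\mu$ is nondecreasing in $s$ for every $k$. On $\R$ this holds because $\int\min\{N,k\}$ is a maximum over $k$-colorings of the total length of the color-class unions, and each union grows as the centers spread apart; one then passes to $\T$ by lifting. Summing over $k\ge2$ via $a(a-1)-2(a-1)_+=2\sum_{k\ge2}(a-k)_+$, and cancelling the common-center part of the pair count (which is exactly linear in $\delta$), gives $Q_V(\delta_1)\ge sQ_V(\delta_0)+2(w_1-sw_0)$. Take $\delta_1=\frac32\delta_0$ and use the trivial bound $w_1\ge\frac12$, the upper bound $Q_V(\delta_1)\le\frac94(1-\frac1n)$, and the lower bound $Q_V(\delta_0)\ge1-\frac1n-\frac1{16}$, valid when fewer than $n^2/64$ pairs have $H(u,v)\le64n$. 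This yields $g(V)\ge5/96$. So a spillover inequality is needed precisely in this Type II regime, not for nested clusters as in your Step 2.

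\textbf{The large-GCD regime.} Waste from nested intervals needs no charging scheme: it is exactly the Type I loss $R_V(\delta)$, and $g(V)\ge R_V(1/2n)$ is immediate. The real difficulty is arithmetic. At $\delta=1/2n$ a pair of height $H$ contributes only $2\delta/H=1/(nH)$. Moreover, a speed's shorter nested intervals are wasted only once, however many partners it has. Hence $\varepsilon n^2$ pairs with $H(u,v)\le Cn$ give nothing directly: a matching among them is only guaranteed to contribute $\frac n2\cdot\frac1{Cn^2}=O(1/n)$. What you need is $\Omega(n)$ \emph{disjoint} pairs of \emph{bounded} height $O_{C,\varepsilon}(1)$.

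The paper gets this by modifying the Green--Walker iteration. It maximizes $\rho^q ab\,\frac{D^2}{AB}\,z^\sigma K^{-r}$, where the factor $z^\sigma$ penalizes loss of cardinality, and thereby places a positive proportion $U$ of $V$ in a rational box $\{\xi a/b:a\le A,\,b\le B\}$ with $AB=O(|U|)$. Inside the box, a smooth/rough counting shows that no half of $U$ can have pairwise large heights, and a maximal matching in the bounded-height graph then gives the required disjoint pairs. GCD-sum bounds for $\sum\gcd(v,w)/\sqrt{vw}$ give no such structural conclusion, and nothing in your proposal replaces this step.
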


We in fact proved a stronger statement about the measure of the times at which all runners are separated from the stationary runner. Let
\[
 g(V)=\mu\{t\in\T:\|vt\|>1/2n\text{ for every }v\in V\},
\]
and we showed that $g(V)>\kappa_0$ for some absolute constant $\kappa_0$. One may then take $c=\kappa_0/2$ in Theorem~\ref{thm:main}.

The restriction $n\ge2$ is necessary, since $\ML(\{v\})=1/2$. All constants in the proof are explicit and we do not attempt to optimize them. We give a choice of $\kappa_0$ in Section~\ref{sec:main proof}.

Our main theorem also yields a nontrivial lower bound for the \emph{pyjama problem}, solved by Manners~\cite{MannersPyjama}, which asks for the minimum number $P(\varepsilon)$ of rotated copies of the periodi stripe $(E(\varepsilon)={z\in \mathbb C:|\mathfrak{R}z|\le \varepsilon})$ needed to cover the plane. Kravitz and Leng~\cite{KravitzLeng} observed that a variant allowing both rotations and dilations is equivalent to the lonely runner problem. Consequently, Theorem~\ref{thm:main} implies that $P(\varepsilon)\ge (1+c)/(2\varepsilon)$, giving a constant relative improvement over the trivial density lower bound $P(\varepsilon)\ge 1/(2\varepsilon)$.

\subsection*{Organization}
In the next section, we will give an overview of our proof. 
Section~\ref{sec:Type I} records the overlap identities and the pair estimates for both Type I and Type II overlaps. Section~\ref{sec: spillover} proves the dilation spillover inequality. Section~\ref{sec:rational-boxes} proves a dense family with bounded rational coordinates has substantial overlap at common centers. In Section~\ref{sec:large GCD}, we recall the large GCD argument with the quantitative form needed here. Section~\ref{sec:linear ratio sparsity} refines this argument to remain a positive proportion of the speeds and proves the arithmetic input. We combine the two parts in Section~\ref{sec:main proof}.

\subsection*{AI statement}
The main strategy and mathematical framework of the paper were developed by the authors. ChatGPT 6 Astra and ChatGPT 6 Pro were used primarily to verify arguments, turn preliminary or informal proofs into rigorous proofs, and explore possible refinements. From the authors' perspective, this is an interesting example of the importance of human mathematician input in mathematical research: the models were unable to improve the lonely runner bound on their own.

The starting point came from the first author, who initially work on improving Bedert's bound by replacing dissociation dimension with a finer measure of the arithmetic complexity of the speed set. Motivated by a heuristic analogy between dissociation in Riesz products and $p$-adic frequency separation, and by the projection and multiscale methods of~\cite{DFGGL}, he explored modular smoothing and scale summation for a fixed prime $p$, obtaining partial improvements when the speed set satisfies suitable modularity conditions. To combine information across different primes and impose further constraints on speed sets not covered by these improvements, ChatGPT 6 Pro suggested introducing suitable GCD sum estimates. These partial results formed the starting point of the  collaboration, which aimed to establish a uniform improvement for arbitrary speed sets.

The authors then began working together to make this improvement uniform over all sets $V$. After many rounds of discussion with ChatGPT 6 Pro and ChatGPT 6 Astra, the method was gradually refined. In particular, the AI tools pointed the authors toward the Green--Walker refinement of the GCD sum estimates. However, these discussions did not produce any uniform results.

Motivated by recent joint work with Meng~\cite{JingMeng} and earlier work on spillover methods, the second author then refined the spillover approach developed there and established the dilation spillover inequality, Proposition~\ref{spill:pair spillover}. Some small gaps in the first version of his proof were later fixed by ChatGPT 6 Pro. To handle the remaining Type I overlaps, as defined in the overview, a slightly strengthened version of the Green--Walker theorem was needed. Once the required statement had been identified, ChatGPT 6 Pro assisted the second author in refining the Green--Walker argument to obtain the necessary form. 

The final organization, mathematical presentation, and proofs were written by the authors. ChatGPT 6 Astra was additionally used for language polishing. The figures are drawn by the authors using Ipe.

\subsection*{Acknowledgments}
The authors thank Benjamin Bedert for many useful discussions on the problem. 
The first author thanks Yangkendi Deng for detailed explanations of the techniques and underlying ideas in their work~\cite{DFGGL} on Bourgain's $L^6$ problem on the one-dimensional torus.
The second author thanks Zachary Chase for earlier discussions on Tao's Bohr set approach, Chieu-Minh Tran for discussions on applications and generalizations of the spillover argument for many years.
The second author is supported by NSF grant DMS-2503063.

\section{Proof setup and overview}\label{sec: setup}

Throughout the paper, we use standard terminology from analytic number theory, where $X\ll Y$ means $X\le CY$ for an absolute constant $C>0$, and subscripts indicate possible dependence. All logarithms are natural. The speed set $V=\{v_1,\ldots,v_n\}$ is always a set of $n$ positive integers, and integrals on $\T$ are with respect to normalized Haar measure. To study the level set, we use $x_+$ for $\max\{x,0\}$. We always assume $0<\delta<\frac{1}{2}$.

In this section,
we give an overview of the proof. Along the way, we will also give some key definitions and notation used in the later proofs.

Our proof is different from the earlier approaches by Bedert and by Tao, and we will explain it briefly in this section. From now on, fix a set $V$ of $n\ge2$ distinct positive integers. Let us first explain why an improvement over the union bound requires information about overlaps. We write
\[
 N_\delta(t)=\sum_{v\in V}\mathbbm1_{B(v;\delta)}(t),
 \qquad\text{and}\qquad
 w_V(\delta)=\int_\T(N_\delta-1)_+\dd\mu. 
\]
Thus $N_\delta$ counts the Bohr sets containing $t$, and $w_V(\delta)$ measures their overlap, counting each of the covering beyond the first. The authors think it is helpful to view them geometrically: Write $V=\{v_1,\ldots,v_n\} $, one can embed Bohr set overlaps to $X_\delta\subseteq \Z\times \T$ where $(m,t)\in X_\delta$ means $t$ is an interior point of $B(v_m;\delta)$. Under the natural projection $\pi: \Z\times\T \to \T$, $N_\delta (t) = |\pi^{-1}(t)\cap X_\delta|$ the fiber cardinality function at $t$, and the support set of $(N_\delta-k)_+$ is the $k$-level set. It is important to keep this picture in mind as we will later use layer cake representations to study overlaps. 

We observe that at the radius $\delta_0=1/2n$, the mean of $N_{\delta_0}$ is $1$. As a Consequence,
\[
    w_V(\delta_0)=\mu\{t\in\T:\|vt\|>1/2n\text{ for every }v\in V\}=g(V).
\]
In other words, at this radius the total overlap is exactly the uncovered measure. We need to show that the overlaps cannot be arbitrarily small, uniformly in the number and sizes of the speeds.
We remark that this covering viewpoint is also used by Perarnau and Serra~\cite{PerarnauSerra}, Tao~\cite[Section~3]{Tao}, and Bedert~\cite[Section~2]{Bedert}.

There are two different types of overlap. As the Bohr set $B(v;\delta)$ is the union of $v$ intervals
\[
 I_{v,j}(\delta)=
 \{t\in\T:\|t-j/v\|\le\delta/v\},
\]
where $0\le j<v$. For each of the above intervals, we say it has \emph{rational center} $j/v$. 
Intervals belonging to different speeds may have the same rational center, which gives us \emph{Type I} overlaps. 

\begin{figure}[h]
    \centering
    \includegraphics[width=0.39\linewidth]{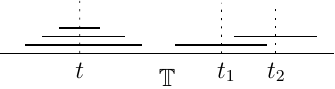}
    \caption{Type I overlaps (left) with rational center $t$ and Type II overlaps (right)}
    \label{fig: overlap}
\end{figure}

For example, the intervals with centers $1/2$ for speeds $2$ and $4$ are nested. More generally, if $a/q\in[0,1)$ is in lowest terms, it is a center for speed $v$ precisely when $q\mid v$. We will show that Type I overlaps can be controlled by the arithmetic of $V$. 

To study Type I overlaps, at each center, we will only keep the largest interval and remove all the others. This leaves the union unchanged. Let $M_\delta(t)$ count the remained intervals at $t$, and let
\[
 R_V(\delta)=\int_\T(N_\delta-M_\delta)\dd\mu
\]
be the total length removed, counted with multiplicity. Since the union is unchanged, $R_V(\delta_0)\le g(V)$. Thus it is enough to find a positive amount of overlap between intervals with a common center. This is the arithmetic case of our approach. 

There is another type of overlaps, that overlaps of intervals with distinct centers. They form \emph{Type II} overlaps, see Figure~\ref{fig: overlap}.

Let $Q_V(\delta)$ be the sum of the Type II intersection, which  measures of intervals with distinct centers over ordered pairs of distinct speeds. See  subsection~3.2 for the explicit definition. For distinct positive integers $u,v$, we define
\[
  H(u,v)=\frac{\max(u,v)}{\gcd(u,v)}.
\]
We call $H(u,v)$ the \emph{height} of the pair.

Lemma~\ref{lem: type ii} shows that the contribution of a pair $(u,v)$ lies between
\[
(2\delta)^2-\frac{4\delta}{H(u,v)} \quad\text{and}\quad(2\delta)^2.
\]
Thus, at $\delta_0$, if all but very less of the ordered pairs satisfy $H(u,v)>Cn$ for a sufficiently large constant $C$, then most pairs contribute close to $1/n^2$.

Type II intersections, in particular $Q_V$ alone do not give the required uncovered measure bound, because many intersecting pairs may all come from the same small region of overlap. Indeed, suppose exactly $r$ intervals of distinct centers cover a point $t$, they form $r(r-1)$ ordered Type II overlap pairs, but the number of intervals beyond the first is only $r-1$. 

\begin{figure}[h]
    \centering
    \includegraphics[width=0.5\linewidth]{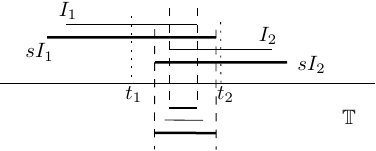}
    \caption{An example at level $0$: $I_1$ and $I_2$ are intervals with center $t_1$ and $t_2$. Dilate them by factor $s$ around their centers, the resulted overlaps exceeds linear growth (the middle line between the two overlaps at the bottom of the figure is the predicted linear growth size). The union of extra gain at any given level is captured by the spillover argument. } 
    \label{fig:Type II overlaps}
\end{figure}

Our new ingredient is a geometric argument that avoids this difficulty by comparing two radii.  Proposition~\ref{spill:pair spillover} proves that, if $0<\delta_0\le\delta_1<1/2$ and $s=\delta_1/\delta_0$, then
\[
 Q_V(\delta_1)\ge sQ_V(\delta_0)
       +2\bigl(w_V(\delta_1)-s w_V(\delta_0)\bigr).
\]
The inequality has a similar purpose to the \emph{spillover argument} for compact groups in the work of Tran and the second author~\cite[Lemma~9.4]{JingTran} and its geometric refinement in the work of Meng and the second author~\cite[Proposition~3.1]{JingMeng}: it is an argument to handle fiber and level sets together, and an initial overlap forces additional growth. Those results concern product sets and sumsets, in particular comparing $\mu(A+A)$ and $2\mu(A)$. Our objects is to compare the total Type II overlaps when each of the interval is dilated about it center. We prove the interval inequality used here directly in Section~\ref{sec: spillover}. As here for the intervals, the integrated overlap above any fixed multiplicity (or in other words, level sets) grows at least linearly when the intervals are enlarged about their centers. On the other hand, the contribution from intervals with a common center is exactly linear in the radius. These terms therefore cancel when we subtract the smaller radius pair count, multiplied by the ratio of the radii, from the larger radius pair count. This leaves a comparison involving only Type II overlaps.

We apply this inequality with $\delta_1=3\delta_0/2$. At the larger radius the total length of the Bohr sets is $3/2$, so $w_V(\delta_1)\ge1/2$. Type II pair estimates give an upper bound for $Q_V(\delta_1)$.
\[
Q_V(\delta_1)\le
\frac94\left(1-\frac1n\right),
\]
The dilation spillover inequality gives,
\[
Q_V(\delta_1)\ge
\frac32 Q_V(\delta_0)+1-3g(V),
\]
Substituting these bounds gives,
\[ 3g(V)\ge 1+\frac32 Q_V(\delta_0) -\frac94\left(1-\frac1n\right).  \] 

This geometric argument gives a uniform positive lower bound for $g(V)$ whenever the Type II contribution $Q_V(\delta_0)$ is sufficiently large. By the pairwise estimates in Lemma~\ref{lem: type ii}, this is the case when very few pairs satisfy $H(u,v)\le Cn$, since most pairs then behave almost independently after the intersections at common centers have been removed.

It remains to consider the complementary case, when Type I overlaps dominates. The arithmetic part of the proof does this when many pairs of speeds have a small ratio after canceling out their common factors. Theorem~\ref{mp:linear Type I} proves that, for every fixed $C\ge1$ and $\varepsilon>0$,
\[
 \#\{\{u,v\}\subset V:H(u,v)\le Cn\}\ge\varepsilon n^2
\]
implies that $R_V(\delta_0)\ge\kappa>0$ for some constant $\kappa$ depending only on $C$ and $\varepsilon$. Since $R_V(\delta_0)\le g(V)$, this gives the required uniform positive lower bound in the remaining case.

Let us briefly explain the arithmetic argument. 
We use the large GCD estimates of Green and Walker~\cite{GreenWalker} and refine their argument to produces a subset $U\subseteq V$ with $|U|$ a positive proportion of $n$, with elements can be written as
\[
 u=\xi\frac{a_u}{b_u},\qquad 1\le a_u\le A,\quad 1\le b_u\le B,\quad AB\ll|U|.
\]
The implied constant depends only on $C$ and $\varepsilon$. There are only $AB$ possible numerator and denominator pairs, so it places $U$ in a dense part of a small rectangular array of rational numbers. A matching argument on the GCD graph then finds many disjoint pairs with bounded reduced ratio. Each such pair gives overlap at common centers, and the matching lets us add these contributions back without overcounting. The proof mostly follows Green--Walker argument, and we modify the quantity optimized in the their argument by a factor that also records how many elements remains in $U$ after each valuation restricted to one prime at a time. This help us to keep a positive proportion of $V$ to make the resulting bound uniform in $n$.

\section{Removing intervals at rational centers and pair intersections}\label{sec:Type I}

As we mentioned in the introduction and section~2, there are two types of interval overlaps. Some pair of the intervals have the same rational center, and their intersection is simply the smaller interval. These overlaps are closely related to the \emph{height} of the speeds pairs. The remaining pairs have distinct centers, and they form the Type II overlaps. In this section, we study the behavior of the overlapping measures of two types when the radius changes.

\subsection{Type I overlaps}
 Recall that $N_\delta$ counts all the intervals in the sets $B(v;\delta)$, and $M_\delta$ counts only the largest interval at each rational center. Since the intervals with the same center are nested, removing the smaller ones preserves their union. The quantity $R_V(\delta)=\int_\T(N_\delta-M_\delta)\dd\mu$ records the total length of the intervals removed with multiplicity. 

If a center is represented by $a/q\in[0,1)$ in reduced lowest terms, then a speed has an interval centered there if and only if it is divisible by $q$. The remained interval then belongs to the smallest speed in $V$ divisible by $q$. For the center $0$, we use $a=0$ and $q=1$. Note that intervals belonging to a single speed have disjoint interiors, since $\delta<1/2$, and for simplicity, when we count the number of intervals at a point, we will ignore the finitely many endpoints. 

We first relate nested intervals removed at each center to the uncovered measure of interest. As discussed in the introduction, when the radius is $1/2n$, the sum of the interval lengths is exactly one. If these sets overlap, the sum counts some points more than once. Consequently, their union has measure less than 1, and the amount counted more than once is exactly the measure left uncovered. We record this as a simple lemma below. 

\begin{lemma}\label{lem: base identities}
Let $\delta_0=1/2n$. Then we have
\[
 \int_\T N_{\delta_0}\dd\mu=1,\qquad
 \int_\T(N_{\delta_0}-1)_+\dd\mu=g(V),\qquad
 0\le R_V(\delta_0)\le g(V).
\]
More generally, if $g_\delta$ denotes the measure of the complement of $\bigcup_{v\in V}B(v;\delta)$, then
\[
 \int_\T(N_\delta-1)_+\dd\mu=2n\delta-1+g_\delta.
\]
\end{lemma}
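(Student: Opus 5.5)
The plan is to derive everything from two elementary facts: each Bohr set has measure $\mu(B(v;\delta))=2\delta$, and $(x-1)_+=x-\mathbbm 1_{x\ge1}$ for nonnegative integers $x$. I will prove the general identity first and then specialize to $\delta_0$.

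\emph{The first integral.} By linearity and $\mu(B(v;\delta))=2\delta$ we get $\int_\T N_\delta\dd\mu=2n\delta$. At $\delta_0=1/2n$ this equals $1$.

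\emph{The general identity.} Since $N_\delta(t)$ is a nonnegative integer, we have pointwise
\[
(N_\delta-1)_+=N_\delta-\mathbbm1_{\{N_\delta\ge1\}},
\]
and $\{N_\delta\ge1\}=\bigcup_{v}B(v;\delta)$ has measure $1-g_\delta$. Integrating gives
\[
\int_\T(N_\delta-1)_+\dd\mu=2n\delta-(1-g_\delta)=2n\delta-1+g_\delta.
\]
At $\delta_0$ this is $g_{\delta_0}$, which is the complement measure of the union of the closed Bohr sets, i.e.\ the set of $t$ with $\|vt\|>1/2n$ for all $v\in V$. This is exactly $g(V)$.

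\emph{The bounds on $R_V(\delta_0)$.} By construction $M_\delta\le N_\delta$ pointwise, since we only remove intervals. Hence $R_V\ge0$.

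For the upper bound, the key observation is that the retained intervals still cover the same union, because intervals at a common center are nested and the largest one is kept. So, up to the finitely many endpoints we ignore, $M_\delta\ge1$ wherever $N_\delta\ge1$. On the set where $N_\delta=0$ both functions vanish. Therefore pointwise
\[
N_\delta-M_\delta\le N_\delta-\mathbbm1_{\{N_\delta\ge1\}}=(N_\delta-1)_+.
\]
Integrating at $\delta_0$ gives $R_V(\delta_0)\le g(V)$.

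\emph{Main obstacle.} Nothing here is genuinely hard. The only points needing care are the closed-versus-open distinction at boundaries, which affects only a null set, and the fact that intervals belonging to one speed have disjoint interiors, so that $N_\delta$ counts intervals and not just speeds. Both are already addressed in the setup, so I will state them briefly.
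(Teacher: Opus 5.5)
Your proposal is correct and follows essentially the same route as the paper: integrate the identity $a=\mathbbm1_{a>0}+(a-1)_+$ using $\mu(B(v;\delta))=2\delta$, and use that removing the nested intervals at common centers preserves the union. The only cosmetic difference is that you state the bound on $R_V(\delta_0)$ pointwise as $N_\delta-M_\delta\le(N_\delta-1)_+$. The paper writes the integrated form $1-g(V)=\mu\{M_{\delta_0}>0\}\le\int_\T M_{\delta_0}\dd\mu=1-R_V(\delta_0)$, which is the same inequality.
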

\begin{proof}
Each set $B(v;\delta)$ has measure $2\delta$. Also, we have the trivial identity $a=\mathbbm1_{a>0}+(a-1)_+$ for every nonnegative integer $a$. By applying this identity to $N_\delta$ and integrating, we obtain the general formula. At $\delta_0=1/2n$, $\int_\T N_{\delta_0}\dd \mu=2n \delta_0=1$, so the first two assertions follow. Finally, removing extra nested intervals preserves the union, and hence
\[
 1-g(V)=\mu\{M_{\delta_0}>0\}
       \le\int_\T M_{\delta_0}\dd\mu
       =1-R_V(\delta_0).
\]
This proves the last assertion.
\end{proof}

In the later proofs, we will often pass to a subset of the speeds and compare the two radii. The next lemma shows that both operations have a simple effect on the Type I loss.

\begin{lemma}
\label{lem: type I loss monotone}
For $U\subseteq V$ and $0<\delta<1/2$, define $N_{U,\delta}, M_{U,\delta},R_{U,\delta}$ similarly with $V$ replaced by $U$. Then we have pointwise inequality
\[
 N_{U,\delta}-M_{U,\delta}\le N_\delta-M_\delta. 
\]
 Moreover, $R_U(\delta)$ is linear in $\delta$ on this range.
\end{lemma}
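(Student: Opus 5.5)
The plan is to prove both assertions by a pointwise count at each rational center, working throughout with the decomposition of $N_\delta-M_\delta$ by centers. Fix $t\in\T$ (away from the finitely many endpoints). For a reduced fraction $a/q$, let $k_V(a/q,t)$ be the number of speeds $v\in V$ with $q\mid v$ and $t\in I_{v,av/q}(\delta)$. Every interval containing $t$ has a unique center $a/q$. At a fixed center the intervals are nested, with radius $\delta/v$ decreasing in $v$. The retained interval at that center is the one of the smallest speed divisible by $q$, and it is the largest, so it contains $t$ as soon as any interval at that center does. Hence
\[
 N_\delta(t)-M_\delta(t)=\sum_{a/q}\bigl(k_V(a/q,t)-1\bigr)_+ ,
\]
and the same identity holds for $U$ with $k_U(a/q,t)$ in place of $k_V(a/q,t)$.

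For the pointwise inequality, note that $U\subseteq V$ gives $k_U(a/q,t)\le k_V(a/q,t)$ for every center, because the intervals of $U$ at a center are a subset of those of $V$. The map $x\mapsto(x-1)_+$ is monotone, so the inequality holds termwise, and summing over centers gives $N_{U,\delta}-M_{U,\delta}\le N_\delta-M_\delta$. The one point to be careful about is that the retained interval for $U$ at $a/q$ may belong to a larger speed than the retained interval for $V$. This is harmless, since the formula above only uses the counts $k$, not which interval is kept.

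For linearity, integrate the identity for $U$. At each center $a/q$ with $q$ dividing at least one element of $U$, let $u_q=\min\{u\in U:q\mid u\}$. The removed intervals at $a/q$ are those of the speeds $u\in U$ with $q\mid u$ and $u\ne u_q$, each of length $2\delta/u$. This gives
\[
 R_U(\delta)=\sum_{q\ge1}\ \sum_{\substack{0\le a<q\\ \gcd(a,q)=1}}\ \sum_{\substack{u\in U,\ q\mid u\\ u\neq u_q}}\frac{2\delta}{u}
 =2\delta\sum_{q}\varphi(q)\sum_{\substack{u\in U,\ q\mid u\\ u\ne u_q}}\frac1u ,
\]
which is a finite sum (only $q$ dividing some element of $U$ contribute) and has coefficients independent of $\delta$. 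So $R_U(\delta)$ is linear in $\delta$.

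The only thing that could fail is the claim that each removed interval contributes its full length $2\delta/u$, with no truncation or wrap-around. This needs $\delta<1/2$: then each interval of a single speed is a genuine arc of length $2\delta/u<1/u$, and intervals of a single speed have disjoint interiors. Under that assumption, the set of intervals and their nesting pattern at each center do not depend on $\delta$, so no step is a real obstacle.
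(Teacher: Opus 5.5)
Your proof is correct and follows essentially the same route as the paper. The paper proves the pointwise inequality by adding the speeds of $V\setminus U$ one at a time and checking that the per-center contribution can only increase. That contribution is the sum of the indicators minus the indicator of the largest interval, and your closed form $\sum_{a/q}(k(a/q,t)-1)_+$ together with $k_U\le k_V$ encodes the same fact. For linearity the paper uses the same per-center computation $2\delta\bigl(\sum_{v\in U,\,q\mid v}1/v-1/v_q\bigr)$, and your factor $\varphi(q)$ simply makes the sum over centers with reduced denominator $q$ explicit.
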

\begin{proof}
Consider first that the intervals are at a fixed rational center. Their contribution to $N-M$ is the sum of their indicators minus the indicator of the largest interval. If we add another interval contained in the largest one, this contribution increases by the indicator of the new interval. If we add a larger interval, it increases by the indicator of the previously largest one. Adding the speeds of $V\setminus U$ one at a time and summing over the centers proves the pointwise inequality.

For the second part, we consider a center with reduced denominator $q$ that occurs among the speeds of $U$. Write $v_q=\min\{v\in U:q\mid v\}$. Its contribution to the integrated loss $R$ is
\[
 2\delta\Bigg(\sum_{v\in U,\, q\mid v}\frac1v
                    -\frac1{v_q}\Bigg).
\]
Observe that the expression in brackets does not depend on $\delta$. Summing over all centers proves the lemma.
\end{proof}

\subsection{Type II overlaps}

As we mentioned in the introduction, the intersection of two Bohr sets has two different types. Some pairs of intervals have the same center, and their intersection is simply the smaller interval. The remaining pairs have distinct centers, and they form the Type II overlaps. We separate Type II contributions because the first is exactly linear in $\delta$, and the second is close to the quadratic quantity $(2\delta)^2$ when the height $H(u,v)$ is large.

For distinct speeds $u,v$, let
\[
 q_\delta(u,v)=
 \sum_{\substack{0\le a<u,\ 0\le b<v\\a/u\ne b/v}}
    \mu\bigl(I_{u,a}(\delta)\cap I_{v,b}(\delta)\bigr).
\]
Thus $q_\delta(u,v)$ counts intersections of intervals with distinct centers. Define two quantities
\[
 Q_V(\delta)=\sum_{\substack{u,v\in V\\u\ne v}}q_\delta(u,v),
 \qquad
 P_V(\delta)=\int_\T N_\delta(N_\delta-1)\dd\mu.
\]
Let us remark that all sums over pairs of speeds in this subsection are ordered. In other words, $P_V(\delta)$ is the sum of the measures of $B(u;\delta)\cap B(v;\delta)$ over ordered distinct $u,v\in V$.
The following calculation is a variant of the rank two Bohr set estimates of Perarnau and Serra~\cite[Propositions~7]{PerarnauSerra}.

\begin{lemma}\label{lem: type ii}
Let $h=H(u,v)$ be the height. Then
\begin{equation}\label{eq: pair bounds}
 4\delta^2-\frac{4\delta}{h}
          \le q_\delta(u,v)\le4\delta^2.
\end{equation}
Moreover,
\begin{equation}\label{eq:common center linear}
 P_V(\delta)-Q_V(\delta) =2\delta\sum_{\substack{u,v\in V\\u\ne v}}
                   \frac{\gcd(u,v)}{\max(u,v)}.
\end{equation}
In particular, $Q_V(\delta)\le 4\delta^2n(n-1)$.
\end{lemma}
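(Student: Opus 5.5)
The plan is to reduce everything to a one-variable Riemann-sum comparison for the overlap of two intervals as a function of the distance between their centers. Fix distinct $u,v$ and write $d=\gcd(u,v)$, $u=du'$, $v=dv'$, and $L=\operatorname{lcm}(u,v)=du'v'$, so that $dL=uv$. The centers satisfy $a/u-b/v=(av'-bu')/L$. Since $\gcd(u',v')=1$, the map $(a,b)\mapsto av'-bu' \bmod L$ from $\Z/u\times\Z/v$ to $\Z/L$ is a surjective group homomorphism, because its image contains $\gcd(u',v')=1$. Each residue $m\bmod L$ therefore arises from exactly $uv/L=d$ pairs $(a,b)$. In particular, common centers ($m\equiv0$) come from exactly $d$ pairs.

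Next I would introduce $\varphi(x)=\mu\bigl([-\delta/u,\delta/u]\cap[x-\delta/v,x+\delta/v]\bigr)$ for $x\in\T$. This is the overlap of two arcs whose centers are at distance $\|x\|$; since $\delta<1/2$, there is no wrap-around. It is an even trapezoid, nonincreasing in $\|x\|$, with maximum $\varphi(0)=2\delta/\max(u,v)$ and total integral $\int_\T\varphi\dd\mu=(2\delta/u)(2\delta/v)$. The counting from the first step gives
\[
 q_\delta(u,v)=d\sum_{m\not\equiv0 \bmod L}\varphi(m/L).
\]

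For the upper bound, I would use monotonicity of $\varphi$ on each side of $0$. For $1\le m\le L/2$ one has $\varphi(m/L)\le L\int_{(m-1)/L}^{m/L}\varphi$, and symmetrically on the negative side. Hence
\[
 \sum_{m\ne0}\varphi(m/L)\le L\int\varphi=L\cdot 4\delta^2/(uv),
\]
and multiplying by $d$ gives $q_\delta\le 4\delta^2$. For the lower bound, I would compare in the other direction: $\varphi(m/L)\ge L\int_{m/L}^{(m+1)/L}\varphi$ for $m\ge0$ on the right, and symmetrically on the left. This gives
\[
 \sum_{m}\varphi(m/L)\ge L\int\varphi-\varphi(0),
\]
the loss being at most one extra copy of the peak cell. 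Removing the $m=0$ term costs another $\varphi(0)$, so
\[
 q_\delta\ge 4\delta^2-2d\varphi(0)=4\delta^2-\frac{4\delta d}{\max(u,v)}=4\delta^2-\frac{4\delta}{h}.
\]
The only delicate point I expect is bookkeeping this discrete-versus-continuous comparison near the peak, and near $\|x\|=1/2$ when $L$ is odd. Unimodality handles both cleanly, since the total error is at most one maximum value per side.

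For \eqref{eq:common center linear}, note that intervals of a single speed have disjoint interiors. Hence $\mu(B(u;\delta)\cap B(v;\delta))$ equals the sum over all pairs $(a,b)$ of $\mu(I_{u,a}\cap I_{v,b})$, and summing over ordered $u\ne v$ gives $P_V(\delta)$. The difference $P_V-Q_V$ is then the contribution of the $d$ common-center pairs, each contributing the smaller interval, of measure $2\delta/\max(u,v)$. This yields $2\delta\gcd(u,v)/\max(u,v)$ per ordered pair. Finally, summing the upper bound in \eqref{eq: pair bounds} over the $n(n-1)$ ordered pairs gives $Q_V(\delta)\le4\delta^2n(n-1)$.
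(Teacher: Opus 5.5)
Your proof is correct and is essentially the paper's argument. Counting exactly $d$ pairs $(a,b)$ in each residue class modulo $\mathrm{lcm}(u,v)$ replaces the paper's pullback under $t\mapsto dt$ to coprime speeds. Your Riemann-sum comparison, using that $\varphi$ is even and nonincreasing in $\|x\|$ with peak $2\delta/\max(u,v)$ and integral $4\delta^2/(uv)$, is the same step the paper carries out for the overlap function on $\R$.

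One justification needs fixing. Your claim that $\delta<1/2$ rules out wrap-around is false: for $u=1$, $v=2$ and $\delta>1/3$, the arcs $I_{1,0}(\delta)$ and $I_{2,1}(\delta)$ meet in two components, so $\varphi$ is not a trapezoid. The properties you actually use still hold. On $[0,1/2]$ the torus overlap is $F(x)+F(1-x)$, where $F$ is the real-line trapezoid. The second term only begins to grow once $x>1-\delta/u-\delta/v$. Because $\delta/\min(u,v)<1/2$, this threshold exceeds $\delta/\min(u,v)-\delta/\max(u,v)$, so there $F(x)$ is already falling at unit slope and the two slopes cancel. Hence $\varphi$ is still nonincreasing in $\|x\|$, with the same peak and integral, and both of your bounds go through. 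The paper sidesteps the issue by summing the genuine trapezoid $F(j/uv)$ over all integers $j$.
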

\begin{proof}
We first reduce the problem to coprime speeds. Let $d=\gcd(u,v)$ and let $\chi_d:\T\to\T$ the continuous surjective group homomorphism with $\chi_d: t\mapsto dt$. 
Observe that $\chi_d$ preserves normalized measure on $\T$ and pulls back $B(u/d;\delta)$ and $B(v/d;\delta)$ to $B(u;\delta)$ and $B(v;\delta)$, respectively. Each normalized center has $d$ lifts, and intervals at different lifts of the same center cannot intersect, as the spacing of the lifts is $1/d$ and the sum of the half lengths is $\delta/u+\delta/v<1/d$. Thus $\chi_d$ preserves both the total intersection measure and its contribution from equal centers. It also preserves $q_\delta(u,v)$. We may therefore assume $\gcd(u,v)=1$, in which case $h=\max(u,v)$.

Consider two intervals in $\R$ with lengths $2\delta/u$ and $2\delta/v$, and their centers have distance $x$. Let $F(x)$  be the length of their intersection. It is easy to check that the function $F$ is even, nonnegative, and nonincreasing on positive reals. 

To compute $q_\delta(u,v)$, we will lift the centers to $M/u$ and $N/v$, with $M,N\in\Z$ are integers, and write $j=vM-uN$. Their difference of centers is $j/uv$. Since $u$ and $v$ are coprime, all solutions of $(M,N)$ to $j=vM-uN$ with a fixed $j$ can be obtained by
\[
(M,N)\mapsto(M+u\ell,N+v\ell),
\]
where $\ell\in\Z$. Therefore, their intersections are integer translates of a single interval of length $F(j/(uv))$.

We claim that these intersection intervals for different values of $j$, have disjoint interiors after projection to $\T$. Indeed, at any real point there is at most one active lift of an interval of each speed, and this is because $\delta<1/2$. Equal rational centers give the term $j=0$, and the other possible differences for equal centers are nonzero integers, outside the support of $F$.
Therefore, we have
\[
 q_\delta(u,v)=2\sum_{j\ge1}F\left(\frac{j}{uv}\right).
\]

We now compare this sum with the integral of $F$. Since $F$ is
nonincreasing on the positive half line, its left and right
Riemann sums give
\[
 2\sum_{j\ge1}F\left(\frac{j}{uv}\right)
 \le uv\int_\R F(x)\dd x
 \le 2F(0)+2\sum_{j\ge1}F\left(\frac{j}{uv}\right).
\]
By Fubini's theorem, the integral of this intersection function
is the product of the two interval lengths. Thus
\[
 uv\int_\R F(x)\dd x=4\delta^2,\qquad\text{and}
 \qquad F(0)=\frac{2\delta}{\max(u,v)}=\frac{2\delta}{h}.
\]
This proves \eqref{eq: pair bounds}.

For general $u,v$, there are $\gcd(u,v)$ common centers, and the intersection at each one has length $2\delta/\max(u,v)$. After summing over ordered pairs, their total contribution is $P_V(\delta)-Q_V(\delta)$. This proves \eqref{eq:common center linear}. The final claim of the lemma follows by summing the upper bound in \eqref{eq: pair bounds}.
\end{proof}

\section{Dilation of interval multiplicities}\label{sec: spillover}

We now prove the geometric inequality needed to compare two radii. For a family of intervals contribute in $N$, the quantity $\int(N-k)_+$ measures the multiplicity beyond the first $k$ layers. In this section we will show that, when all lengths are multiplied by the same factor, this quantity $\int(N-k)_+$ grows at least linearly. We will first work on the real line $\R$ and then pass to circular arcs by taking many periods of their real lifts.

The first combinatorial lemma allows us to express $\min(N,k)$ using unions of intervals over $\R$.

\begin{lemma}\label{spill:coloring}
Let $I_1,\ldots,I_m$ be finitely many bounded intervals over $\R$, and let $N(x)=\sum_i\mathbbm1_{I_i}(x)$, and $k\ge1$ be an integer. There is a coloring of the intervals with $k$ colors such that exactly $\min\{N(x),k\}$ colors occur at almost every point $x$.
\end{lemma}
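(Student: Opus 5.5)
The plan is a greedy sweep from left to right, using the interval graph structure. First sort the intervals by left endpoint, breaking ties arbitrarily. Endpoints form a finite set, so it suffices to get the claim off that set; open versus closed endpoints can therefore be ignored. Then assign colors one interval at a time, in this order, maintaining the following invariant at every non-endpoint $x$ to the left of the current left endpoint: the number of distinct colors among the intervals already colored that contain $x$ equals $\min\{N(x),k\}$. Since every interval containing such an $x$ starts before the current left endpoint, those intervals have all been colored.

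The rule for coloring the next interval $I=[a,b]$ is as follows. Look at the intervals already colored that contain points just to the right of $a$, i.e.\ those $J$ with right endpoint $>a$. If some color among the $k$ is missing from them, give $I$ a missing color. If all $k$ colors already appear, give $I$ the color whose \emph{latest-reaching} representative ends soonest. More precisely, for each color $c$ let $e_c$ be the maximal right endpoint of a colored interval of color $c$, and choose $c$ minimizing $e_c$. If $e_c\ge b$ for this choice, then $I$ is dominated and any choice is harmless.

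The key step, and the main obstacle, is to verify that the invariant survives as we move right. The right way to argue is to check that at each non-endpoint $x$ the set of colors present is "as large as possible". Fix $x$ and suppose $N(x)=r$, with $r'=\min\{r,k\}$. Take the intervals containing $x$ in the order they were colored.

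Each such interval $I$ either got a color not then present on the right of its start $a$, or all $k$ colors were then present. In the second case, choosing the color with smallest reach $e_c$ ensures the following: if $e_c<x$, then after coloring $I$ all $k$ colors are still present at $x$, because every other color $c'$ has $e_{c'}\ge e_c$. Here I need to argue that the other colors reach past $x$. This is delicate, since $e_{c'}\ge e_c$ does not imply $e_{c'}>x$.

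A cleaner invariant is therefore needed, and I would switch to it. At each moment, consider the current sweep position $a$ and the multiset of reaches $\{e_c\}$ of the colors. Claim: the colors present at any $y\ge a$ (counting only colored intervals) number exactly $\min\{\#\{\text{colored intervals containing }y\},k\}$. This holds because the colored intervals alive at $y$ are exactly those with right endpoint $>y$, and the greedy rule keeps, for each color, the one with the farthest reach. Concretely, I would prove by induction that the reaches $e_c$ are distinct among colors used, and that the $\min(\cdot,k)$ intervals with the largest right endpoints among those alive at $a$ have distinct colors. This "top-$k$ reaches have distinct colors" statement is the invariant. Giving a new interval the color of smallest reach replaces exactly the $(k{+}1)$-st largest reach among the top ones, which preserves the invariant.

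Given this invariant at sweep position $a$, any $y\ge a$ before the next left endpoint has the alive intervals exactly equal to those with right end $>y$. These are a top segment of the reaches, so the number of their colors is $\min\{N(y),k\}$, which proves the lemma.
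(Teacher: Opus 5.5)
Your final argument is correct and is essentially the paper's proof. Both sweep by left endpoints. A new interval gets a missing color if fewer than $k$ intervals are active, and otherwise the color of the $k$-th active interval in decreasing order of right endpoint; under the invariant this is exactly your smallest-reach color. The invariant is that the $\min\{m,k\}$ active intervals reaching farthest carry distinct colors, and it survives deaths because they remove only a bottom segment of that order.

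Drop the side claim that the reaches $e_c$ are distinct, since it is false: for $[0,2],[1,2]$ with $k=2$, both colors have reach $2$. It is also not needed, because ties in right endpoints can be ordered arbitrarily in the invariant.
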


\begin{proof}
Note that end points of intervals partition the line into finitely many open cells, we denote this family of open cells by $\mathscr C$. Now we order intervals by their left end points, and if two intervals share a same left end point, the shorter interval has smaller label. 

We color intervals greedily by their order. When a new interval begins at their left end point $x$, let $C_x$ be the open cell in $\mathscr C$ with left end point $x$, and suppose there are $m$ colored intervals contain $C_x$ as their interiors. Now we relabel this $m$ active intervals by the order of their right end point, in decreasing order of their right endpoints. 

If $m<k$, we assign the new interval a color that is not used by these $m$ active intervals. If $m\geq k$, we assign the new interval the color of the $k$-th active interval under the new labels. 

We claim that every point $x$ other than end points of intervals, exactly $\min\{N(x),k\}$ many colors occur at $x$. Indeed, when a new interval begins at $x$, either it does not enter the first $k$ positions over points in $C_x$, or it replaces the previous $k$-th interval and receive its color. When intervals end, those with smaller right end points leave first so the claim is again preserved. 
\end{proof}

We remark that a direct torus version of Lemma~\ref{spill:coloring} is false. Indeed, three intervals with centers $0,1/3,2/3$ and length $1/2$ intersect pairwise on sets of positive measure but have no triple intersection. A coloring as in the lemma with two colors would have to give each pair different colors.

We next stay in $\R$ and keep the interval lengths fixed. As the centers move apart, the union in each color class has nondecreasing length. Maximizing over colorings therefore shows that 
\[
\int_{\mathbb R}\min\{N(x),k\}\dd x
\]
cannot decrease either. Rescaling then gives the desired dilation inequality.

\begin{lemma}\label{spill:dilation}
Let $I_i(s)=[c_i-sr_i,c_i+sr_i]\subseteq\R$ for $1\le i\le m$,
where $r_i>0$, and let $N_s=\sum_i\mathbbm1_{I_i(s)}$. Then for every integer $k\ge1$, we have
\[
 \frac1s\int_\R(N_s-k)_+
\]
is nondecreasing for $s>0$. The same assertion holds for a
finite family of circular arcs with fixed centers and
half-lengths $sr_i$, on any range of $s$ for which every arc
has length less than one, see the next lemma.
\end{lemma}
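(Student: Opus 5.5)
The plan follows the route announced before the lemma: fix interval lengths, move the centers apart, show that $\int\min\{N,k\}$ does not decrease, and then rescale. Start from the identity $(N_s-k)_+=N_s-\min\{N_s,k\}$. Integrating gives
\[
\int_\R(N_s-k)_+=2s\sum_i r_i-\int_\R\min\{N_s,k\}.
\]
The first term divided by $s$ is the constant $2\sum_i r_i$. So it suffices to show that $s\mapsto \frac1s\int_\R\min\{N_s,k\}$ is nonincreasing.

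Next apply the change of variables $x=sy$. The family $I_i(s)$ becomes the family of intervals $[c_i/s-r_i,\,c_i/s+r_i]$, whose lengths are fixed and whose centers are $c_i/s$. We obtain
\[
\frac1s\int_\R\min\{N_s,k\}=\int_\R\min\{\widetilde N_{1/s},k\},
\]
where $\widetilde N_\lambda$ counts the intervals $[\lambda c_i-r_i,\lambda c_i+r_i]$. Increasing $s$ means decreasing $\lambda=1/s$, which contracts the centers toward $0$ with the lengths held fixed. The required monotonicity is therefore: for $0<\lambda\le\lambda'$, we have $\int\min\{\widetilde N_\lambda,k\}\le\int\min\{\widetilde N_{\lambda'},k\}$.

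To prove this, use the variational formula
\[
\int_\R\min\{N,k\}=\max_{\text{colorings}}\sum_{\text{colors }\gamma}\mu\Bigl(\bigcup_{i\text{ colored }\gamma}I_i\Bigr).
\]
The inequality $\ge$ holds for every coloring, since at each point at most $\min\{N,k\}$ colors can occur. The inequality $\le$ is exactly Lemma~\ref{spill:coloring}.

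Fix the coloring that is optimal at $\lambda$ and keep it at $\lambda'$. It then suffices to show that for any finite family of intervals with fixed lengths, the measure of the union does not decrease when all centers are scaled by a factor $\ge1$. Sort the centers within one color class. The union of a sorted family of intervals equals the sum of the lengths minus the sum of the overlaps of consecutive intervals after merging. More robustly, the measure of the union is $\sum_j\min\{\ell_j,\ \text{gap contributions}\}$, computed as follows: the union of $[a_i,b_i]$ equals $\int \mathbbm 1\{\exists i: a_i\le x\le b_i\}$, and its complement inside $[\min a,\max b]$ consists of gaps between consecutive order statistics.

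A cleaner argument is a one-step induction. Compare the union measure with $\sum_i(\text{new length contributed by }I_i)$ in the order of the centers. Each interval contributes $\min\{2r_i,\ \lambda c_i+r_i-\max_{j<i}(\lambda c_j+r_j)\}_+$-type terms, and each is nondecreasing in $\lambda$ because $\lambda(c_i-c_j)$ increases whenever $c_i\ge c_j$.

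I expect this union-monotonicity under center scaling to be the main technical point. The concern is that ordering the intervals by center while their right endpoints are not ordered consistently with the centers (the radii differ) makes the incremental formula slightly delicate. I would handle it by a continuous-time argument. The derivative in $\lambda$ of the union measure is a sum, over the boundary points of the union, of $\pm c_i$ for the interval owning that boundary point. At a right boundary point $\lambda c_i+r_i$, the owning interval has the largest right end among those covering nearby points. At the adjacent left boundary point, the owning interval has the smallest left end. Pairing the boundary points of each connected component shows that the derivative equals $c_{\text{right}}-c_{\text{left}}$. This is $\ge0$: the interval with the largest right end has a center at least as large as that of the interval with the smallest left end, since the right interval's center lies to the right of the left interval's, or else one of them would not be extremal. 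The exception is one interval containing the other, but then the union does not depend on the inner interval. Summing over colors then gives the monotonicity of $\int\min\{N,k\}$ and hence the lemma on $\R$.

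For circular arcs, lift to $\R$ by taking $L$ periods of each arc. Because every arc has length less than one, each lifted interval has length below one and the lifts of a single arc do not overlap. Divide the inequality on $\R$ by $L$ and let $L\to\infty$; the boundary effects are $O(1/L)$ and vanish in the limit.
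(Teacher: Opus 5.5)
Your proposal is correct and follows the paper's own proof. Both use the identity $(N_s-k)_+=N_s-\min\{N_s,k\}$, the rescaling $x=sy$ to the fixed-length intervals $[\lambda c_i-r_i,\lambda c_i+r_i]$ with $\lambda=1/s$, the coloring formula from Lemma~\ref{spill:coloring}, and monotonicity of each color class's union via the component-wise derivative $c_j-c_i\ge0$. Two cleanups: drop the abandoned incremental formula, which is not valid in general because a wide interval with a larger center can add length to the left of earlier intervals; and note that there is no exceptional case, since $\lambda c_i-r_i\le\lambda c_j-r_j$ and $\lambda c_i+r_i\le\lambda c_j+r_j$ add to give $c_i\le c_j$ directly, with the finitely many changes in endpoint order handled by continuity.
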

\begin{proof}
We first consider the real intervals $J_i(\lambda)=[\lambda c_i-r_i,\lambda c_i+r_i]$, that the centers are all dilated by $\lambda$. We claim that the length of their union is nondecreasing for $\lambda>0$. Indeed, Each endpoint $\lambda c_i\pm r_i$ is a linear function of $\lambda$. Two distinct such functions agree at most once, so the order of the endpoints changes only finitely many times. Between these values of $\lambda$, each connected component of $\bigcup_i J_i(\lambda)$ has left endpoint $\lambda c_i-r_i$ and right endpoint $\lambda c_j+r_j$ for fixed indices $(i,j)$. The indices remain fixed even though the endpoints move. We also conclude that
\[
 \lambda c_i-r_i\le\lambda c_j-r_j,
 \qquad
 \lambda c_i+r_i\le\lambda c_j+r_j.
\]
And adding this gives $c_i\le c_j$. The derivative of the component's length is therefore $c_j-c_i\ge0$. Summing over components, and using continuity, we prove the claim.

Now we define $\widetilde N_\lambda$ be the function that counts the number of $J_i(\lambda)$ at each points. Let 
\[
\mathscr C : J_i(\lambda) \to \{1,\dots,k\}
\]
be an arbitrary coloring of intervals $J_i(\lambda)$ with $k$ colors.  For each color class, take the union of its intervals. The sum of the lengths of these unions is at most 
\[
\int_{\R}\min\{\widetilde N_\lambda(x),k\}\dd x,
\]
since the number of colors presented at $x$ is at most both $\widetilde N_\lambda(x)$ and $k$. Lemma~\ref{spill:coloring} gives a coloring which attains the equality. Hence
\[
 \int_\R\min\{\widetilde N_\lambda,k\} \dd\mu_\R
 =\max_{\mathscr C}\sum_{a=1}^k
       \left|\bigcup_{i:\,\mathscr C(i)=a}J_i(\lambda)\right|.
\]
Each union of intervals in a color class being maximized is nondecreasing in $\lambda$, by the preceding claim. The left hand side is therefore nondecreasing as well. On the other hand,
$\int_\R\widetilde N_\lambda=\sum_i2r_i$ is a constant.
Subtracting shows that $\int_\R(\widetilde N_\lambda-k)_+$ is nonincreasing, as 
\[
(\widetilde N_\lambda-k)_+ + \min\{\widetilde N_\lambda,k\} = \widetilde N_\lambda.
\]
Finally, the change of variables $x=sy$ gives
\[
 \frac1s\int_\R(N_s-k)_+
       =\int_\R(\widetilde N_{1/s}-k)_+,
\]
which proves the desired conclusion. 
\end{proof}

We now project the lemma to $\T$ by lifting intervals to $\R$ and approximate them by finite subfamilies of intervals. 

\begin{lemma}\label{spill:dilation in T}
The same conclusion in Lemma~\ref{spill:dilation} holds for a finite family of intervals in $\T$, all with fixed centers and lengths $2sr_i$, on any range of $s$ such that every interval has length less than one.
\end{lemma}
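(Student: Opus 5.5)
We prove the torus statement by lifting each arc to its periodic family of real intervals, truncating to $L$ periods, and applying Lemma~\ref{spill:dilation}. Fix the family of arcs on $\T$ with centers $c_i\in[0,1)$ and half-lengths $sr_i$, and restrict to the range of $s$ on which $2sr_i<1$ for all $i$. Let $N_s^{\T}$ be their multiplicity function on $\T$. For an integer $L\ge1$, consider the finite real family
\[
 \mathcal F_L(s)=\bigl\{[c_i+p-sr_i,\,c_i+p+sr_i]:1\le i\le m,\ 0\le p<L\bigr\}.
\]
Its multiplicity function is $N_s^{(L)}$. Since each arc has length less than one, distinct integer translates of the same arc have disjoint interiors. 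Hence on the window $[1,L-1]$ the function $N_s^{(L)}(x)$ agrees, away from endpoints, with the lift $N_s^{\T}(x\bmod 1)$. Indeed, every lift meeting $[1,L-1]$ has its center in $[0,L)$, because the half-lengths are below $1/2$.

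\textbf{Comparison of integrals.} We compare $\int_\R(N_s^{(L)}-k)_+$ with $(L-2)\int_\T(N_s^{\T}-k)_+\dd\mu$. The part of the real integral over $[1,L-1]$ equals exactly $(L-2)\int_\T(N^\T_s-k)_+\dd\mu$. The remaining part lies in $[-1,1]\cup[L-1,L+1]$. There $N_s^{(L)}\le m$, since at most one translate of each arc is active at a point. So this boundary contribution is at most $4m$, uniformly in $L$ and in $s$ within the range. Therefore
\[
 \frac1s\int_\T(N_s^{\T}-k)_+\dd\mu=\lim_{L\to\infty}\frac1{L}\cdot\frac1s\int_\R(N_s^{(L)}-k)_+ .
\]
The error terms are $O(m/(sL))$, which is uniform for $s$ in a compact subinterval of the allowed range (indeed for $s$ bounded below).

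\textbf{Conclusion.} By Lemma~\ref{spill:dilation}, for each fixed $L$ the map $s\mapsto \frac1{sL}\int_\R(N_s^{(L)}-k)_+$ is nondecreasing on all of $s>0$. The family $\mathcal F_L(s)$ is a genuine finite family of real intervals with fixed centers $c_i+p$ and half-lengths $sr_i$, so the lemma applies with no restriction on $s$. Take $s<s'$ in the allowed range. We have $\Phi_L(s)\le\Phi_L(s')$ for every $L$, where $\Phi_L$ denotes the normalized quantity above. Letting $L\to\infty$ gives the same inequality for the torus quantity.

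\textbf{Main obstacle.} The key point is the first step: identifying the truncated real multiplicity with the torus multiplicity in the bulk. This is exactly where the hypothesis that every arc has length less than one is needed. Without it, a single arc's lifts would overlap and $N^{(L)}$ would overcount relative to $N^\T$. This is consistent with the remark after Lemma~\ref{spill:coloring}: the torus coloring fails, but averaging over long real windows sidesteps any need for a torus coloring. I would also check that endpoints form a null set and so can be ignored, as throughout the paper.
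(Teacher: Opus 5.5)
Your proposal is correct and follows the paper's proof: you lift the arcs to their periodic real family and truncate to finitely many periods. You then match the truncated multiplicity with the torus multiplicity away from an $O(m)$ boundary error, apply Lemma~\ref{spill:dilation} at two fixed values $s<s'$, normalize by the number of periods and let $L\to\infty$. Your one-sided truncation $0\le p<L$ with bulk window $[1,L-1]$, in place of the paper's $-L\le j\le L$ with boundary strips of width $b_s$, is only a cosmetic difference.
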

\begin{proof}
For each intervals in $\T$, choose representative centers $c_i\in[0,1)$. Consider the real lifts, and the full family of real lifts has centers $c_i+j$, with $j\in\Z$. Let $N_s^{\mathrm{lift}}$ count the lifted intervals containing $x$. Clearly, this function is $1$-periodic.

To get finitely many intervals, for an integer $L\ge1$, remain only the lifts with $-L\le j\le L$, and denote the remained intervals containing by $N_{s,L}$. Let $b_s=\max_i sr_i<1/2$ the maximum half length of intervals. Except for points from the two boundary intervals
\[
 [-L-b_s,-L+b_s]\ \cup\ [L+1-b_s,L+1+b_s],
\]
the function $N_{s,L}$ agrees with $\mathbbm1_{[-L,L+1)}N_s^{\mathrm{lift}}$. This is because an omitted lift can reach the interval $[-L,L+1)$ only within distance $b_s$ of one of its endpoints.

Observe that the integer translates of any interval have disjoint interiors, so both $N_s^{\mathrm{lift}}$ and $N_{s,L}$ are at most $m$. The map
$x\mapsto(x-k)_+$ is $1$-Lipschitz and vanishes at zero. The boundary strips have total length $4b_s$, and hence by the above discussions
\[
 \left|\int_\R(N_{s,L}-k)_+ -(2L+1)\int_\T(N_s^{\mathrm{lift}}-k)_+\dd\mu\right| \le4mb_s.
\]
Now we apply Lemma~\ref{spill:dilation} to two values $s_1<s_2$, and 
\[
\frac{1}{s_1}\int_\R(N_{s_1,L}-k)_+ \leq \frac{1}{s_2}\int_\R(N_{s_2,L}-k)_+. 
\]
We then keep $s_1,s_2$ fixed, divide by the number $2L+1$ of periods and let $L\to\infty$. The boundary error tends to zero, giving the desired conclusion.
\end{proof}

Let us remark that the final colorings of the real lifts need not be periodic, this is the reason we pass to $\T$ through a limit, and the reason that torus version of Lemma~\ref{spill:coloring} is false but the torus version of Lemma~\ref{spill:dilation} holds.

We can now compare the contribution of distinct centers at two radii. The above dilation lemma in $\T$ controls every layer of $\int_{\mathbb T}(N_\delta-k)_+\dd\mu$. Summing these layers gives a bound for the total pair intersections, and the Type I contribution cancels because it is exactly linear in the radius.

\begin{proposition}[Dilation spillover inequality]
\label{spill:pair spillover}
Let $0<\delta_0\le\delta_1<1/2$, and let $s=\delta_1/\delta_0$. We write $w_j=\int_\T(N_{\delta_j}-1)_+\dd\mu$. Then
\begin{equation}\label{eq:distinct center dilation}
 Q_V(\delta_1)\ge sQ_V(\delta_0)+2(w_1-sw_0).
\end{equation}
Moreover, if $\bigcup_{v\in V}B(v;\delta_0)=\T$, the final term $2(w_1-sw_0)$ is
$2(s-1)$.
\end{proposition}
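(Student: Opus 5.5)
The proof will write $P_V$ as a sum of layer integrals. For a nonnegative integer $a$ we have the identity
\[
a(a-1)=2\sum_{k\ge1}(a-k)_+ ,
\]
since $\sum_{k\ge1}(a-k)_+=\binom{a}{2}$. Integrating against $N_\delta$ gives
\[
P_V(\delta)=2\sum_{k\ge1}\int_\T(N_\delta-k)_+\dd\mu ,
\]
and the sum is finite because $N_\delta\le n$. We view the Bohr sets as the finite family of arcs $I_{v,j}(\delta)$ with fixed centers $j/v$ and half-lengths $\delta/v = s\cdot(\delta_0/v)$. Every arc has length $2\delta/v<1$ for $\delta<1/2$, so Lemma~\ref{spill:dilation in T} applies on the whole range $s\in[1,\delta_1/\delta_0]$. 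It gives, for each $k\ge1$,
\[
\int_\T(N_{\delta_1}-k)_+\dd\mu\ \ge\ s\int_\T(N_{\delta_0}-k)_+\dd\mu .
\]

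Next I separate the first layer. Summing the inequality over $k\ge2$ only gives
\[
P_V(\delta_1)-2w_1\ \ge\ s\bigl(P_V(\delta_0)-2w_0\bigr),
\]
and this is the form I will use. Then I substitute \eqref{eq:common center linear}: $P_V(\delta)=Q_V(\delta)+2\delta S$, where $S=\sum_{u\ne v}\gcd(u,v)/\max(u,v)$ does not depend on $\delta$. The Type I term is exactly linear in $\delta$, so $2\delta_1 S=s\cdot 2\delta_0 S$, and it cancels from both sides. What remains is
\[
Q_V(\delta_1)-2w_1\ \ge\ sQ_V(\delta_0)-2sw_0 ,
\]
which is \eqref{eq:distinct center dilation}. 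Note that the $k=1$ layer must not be included in the sum. Lemma~\ref{spill:dilation in T} would give $w_1\ge sw_0$ for it, but the term has to stay on the right-hand side rather than be discarded.

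For the final claim I use Lemma~\ref{lem: base identities}:
\[
w_j=2n\delta_j-1+g_{\delta_j}.
\]
If the Bohr sets cover $\T$ at radius $\delta_0$, they also cover it at $\delta_1\ge\delta_0$, so $g_{\delta_0}=g_{\delta_1}=0$. Then
\[
w_1-sw_0=2n\delta_1-1-s(2n\delta_0-1)=s-1,
\]
using $s\delta_0=\delta_1$. This gives $2(w_1-sw_0)=2(s-1)$.

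The main point to check is that Lemma~\ref{spill:dilation in T} applies to the family of all arcs at once, including the nested arcs that share a center. It does, because that lemma allows arbitrary centers, coincident ones included. Beyond this, the argument is bookkeeping with the layer-cake identity and the linearity of the Type I term.
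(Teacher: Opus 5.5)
Your proof is correct and follows the paper's argument essentially line by line. Like the paper, you write $a(a-1)-2(a-1)_+=2\sum_{k\ge2}(a-k)_+$, apply Lemma~\ref{spill:dilation in T} to all arcs $I_{v,j}$ at once for each $k\ge2$, cancel the linear common-center term using \eqref{eq:common center linear}, and finish the covering case with Lemma~\ref{lem: base identities}.
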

\begin{proof}
For every integer $0\le a\le n$, we have the following identity, 
\[
 a(a-1)-2(a-1)_+=2\sum_{k=2}^{n-1}(a-k)_+.
\] 
Apply Lemma~\ref{spill:dilation in T} to all the original intervals $I_{v,j}(\delta_0)$ and sum over $k$. The intervals corresponding to any one speed in $V$ are disjoint. Since there are $n$ speeds, $N_\delta(t)\le n$ for every $t\in\T$. The identity above therefore gives
\[
 P_V(\delta_1)-2w_1
       \ge s\bigl(P_V(\delta_0)-2w_0\bigr).
\]
By \eqref{eq:common center linear}, the difference $P_V-Q_V$ is
linear in $\delta$. Subtracting its contribution from both
sides gives \eqref{eq:distinct center dilation}.

If the Bohr sets cover $\T$ at $\delta_0$, they also cover it at $\delta_1$. Lemma~\ref{lem: base identities} then gives $w_j=2n\delta_j-1$, and hence $w_1-sw_0=s-1$.
\end{proof}

\section{Rational boxes and Type I loss}\label{sec:rational-boxes}

We first consider a family of speeds described by fractions with bounded numerators and denominators in the following way. For a positive rational $\xi$ and positive integers $A,B$, we call
\[
 \{\xi a/b:a,b\in\N,\ 1\le a\le A,\ 1\le b\le B\}
\]
a \emph{rational box}, and call $AB$ its \emph{area}, and $A,B$ are its numerator and denominator \emph{side lengths}. Here the word area refers to the number of possible numerator denominator pairs. We only consider subsets of this box consisting of distinct positive integer speeds in the section. We will show that if the number of possible fractions is comparable to the size of the family, then many disjoint pairs have bounded reduced ratio. The overlaps at their common centers give a positive lower bound for the Type I loss.

\begin{lemma}\label{lem:dense rational box}
For every $C\ge1$ there is a constant $\kappa(C)>0$ with the following property. Suppose that $U$ consists of $m\ge2$ distinct positive integer speeds of $V$, and for some positive rational $\xi$, its members have representations $v=\xi a_v/b_v$ which lives in a rational box of numerator and denominator side lengths $A,B$, and area $AB\leq Cm$. 
Then we have
\[
 R_U(1/2m)\ge\kappa(C),\qquad\text{and}
 \qquad g(U)\ge\kappa(C).
\]
\end{lemma}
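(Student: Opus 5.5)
The plan is to bound $R_U(1/2m)$ from below by an explicit arithmetic sum and then estimate that sum using the rational box structure. The bound $g(U)\ge\kappa(C)$ is then immediate: apply Lemma~\ref{lem: base identities} to $U$ in place of $V$, with $|U|=m$ and radius $1/2m$, to get $R_U(1/2m)\le g(U)$.

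\emph{Step 1: reduction to disjoint pairs.} From the proof of Lemma~\ref{lem: type I loss monotone}, the Type I loss at a center with reduced denominator $q$ is $2\delta\bigl(\sum_{v\in S_q}1/v-\max_{v\in S_q}1/v\bigr)$, where $S_q=\{v\in U:q\mid v\}$. This expression is superadditive under partitions of $S_q$, since $\sum_{\text{parts}}\max_{\text{part}}\ge\max_{S_q}$. Hence for any family $\mathcal M$ of pairwise disjoint pairs $\{u,v\}\subseteq U$,
\[
R_U(\delta)\ \ge\ \sum_{\{u,v\}\in\mathcal M}R_{\{u,v\}}(\delta)\ =\ 2\delta\sum_{\{u,v\}\in\mathcal M}\frac{\gcd(u,v)}{\max(u,v)}\ =\ 2\delta\sum_{\{u,v\}\in\mathcal M}\frac1{H(u,v)}.
\]
The middle equality uses that a pair has $\gcd(u,v)$ common centers, each contributing $2\delta/\max(u,v)$, as in \eqref{eq:common center linear}. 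At $\delta=1/2m$, it therefore suffices to find $\gg_C m$ disjoint pairs with $H(u,v)\le K(C)$, or more generally a matching with $\sum_{\mathcal M}1/H\gg_C m$.

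\emph{Step 2: pruning the box.} Only a proportion $\varepsilon C$ of the $AB$ box cells have $a\le\varepsilon A$, and likewise for $b\le\varepsilon B$. With $\varepsilon=1/(4C)$, at least $m/2$ points of $U$ therefore have $a_v\in(\varepsilon A,A]$ and $b_v\in(\varepsilon B,B]$. Call this set $U'$. Every $v\in U'$ lies in $[\varepsilon\xi A/B,\ \xi A/(\varepsilon B)]$, so all speeds in $U'$ are comparable up to the factor $\varepsilon^{-2}$. For $u,v\in U'$ the reduced ratio is $u/v=a_ub_v/(a_vb_u)$, and $H(u,v)$ is the larger of its reduced numerator and denominator.

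\emph{Step 3: producing the pairs (main obstacle).} The difficulty is that a dense subset of the box need not contain pairs of bounded height in a naive way. For example, if $B=1$ and the $a_v$ are all $K$-rough, no two elements have ratio with both terms $\le K$. Consequently I would not look for pairs of bounded height directly. Instead I would lower-bound $R_{U'}$ by counting centers, using the $\varphi(q)$ weights. Set $n_q=|S_q\cap U'|$, so that $\sum_q\varphi(q)n_q=\sum_{v\in U'}v$. Then
\[
R_{U'}(\delta)\ \ge\ \frac{2\delta\,\varepsilon}{\max U'}\sum_q\varphi(q)(n_q-1)_+\ \ge\ \frac{2\delta\,\varepsilon}{\max U'}\Bigl(\sum_{v\in U'}v-\Bigl|\bigcup_{v\in U'}\tfrac1v\Z/\Z\Bigr|\Bigr).
\]
So the key claim is that the union of the cyclic groups $\frac1v\Z/\Z$, $v\in U'$, has size at most $(1-c_C)\sum_v v$. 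Equivalently, a positive proportion of all rational centers are shared.

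To prove this I would write $\xi=r/s$ in lowest terms. The relation $v\,s\,b_v=r\,a_v$ shows that each $\frac1v\Z/\Z$ sits inside $\frac1{ra_v}\Z/\Z$ and inside a group determined by $b_v$. I would then organise the centers by the pair $(a_v,b_v)$ and count collisions via the $\gcd$-sum $\sum_{u\ne v}\gcd(u,v)/\max(u,v)$. Because there are at most $AB\le Cm$ labels for comparable speeds, Cauchy--Schwarz over the box should force this sum to be $\gg_C m$, in the spirit of GCD-sum lower bounds for dense sets. Converting a large $\gcd$-sum into a disjoint matching, or into the center count above, is exactly where Step 1's superadditivity is applied.

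This conversion, getting from density in the box to a linear-in-$m$ amount of shared centers without losing a factor like $\log m$ from divisor-function averages, is the step I expect to be hardest. If the direct center count fails, my fallback is a greedy matching on the graph joining $u,v$ whenever $H(u,v)\le K$. I would choose $K=K(C)$ large and show the graph has average degree $\gg_C1$ using the $\varphi$-weighted count. A linear-size matching then follows, since an $O_C(1)$-degeneracy argument applies.
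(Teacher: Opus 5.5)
The gap is in your Step~3. Steps~1 and~2 are fine: a matching of bounded-height pairs, each contributing $2\delta/H(u,v)$ at its common centers with superadditivity preventing overcounting, is the reduction the paper also uses. So is the deduction $g(U)\ge R_U(1/2m)$ from Lemma~\ref{lem: base identities} applied to $U$.

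But nothing in Step~3 shows that $\gg_C m$ disjoint pairs of height $O_C(1)$ exist.
\begin{itemize}
\item Given the comparability from Step~2, your key claim on $\bigl|\bigcup_{v\in U'}\frac1v\Z/\Z\bigr|$ is essentially equivalent to the lemma itself for $U'$. It restates the problem rather than advancing it.
\item The gcd-sum route cannot supply that claim. For $U=\{1,\dots,m\}$ one has $\sum_{u\ne v}1/H(u,v)\asymp m\log m$, with every dyadic range of heights contributing $\asymp m$. A matching, however, collects only $O(m/h)$ from pairs of height about $h$. For $K$-rough integers up to a large $N$, the gcd-sum is still $\gg m$ although no pair has height $\le K$. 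So a lower bound on the gcd-sum says nothing about bounded heights.
\item Cauchy--Schwarz or pigeonhole over the $AB\le Cm$ labels gives nothing, since the $m$ labels $(a_v,b_v)$ are necessarily distinct.
\item Your fallback rightly returns to the graph of pairs with $H\le K$. There, $\gg_C m$ edges would give a linear matching because each vertex has at most $K^2$ such neighbours; bounded degeneracy alone would not, as a star shows. But that edge count is exactly what is missing, and the ``$\varphi$-weighted count'' is the same unproved claim.
\end{itemize}

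The missing idea is to coarsen the box into fewer than $m/2$ classes inside which heights are bounded. Your $K$-rough example does not obstruct this; it only forces $K$ to be large in terms of $C$. Integers with no prime factor $\le K$ have density $\prod_{p\le K}(1-1/p)\asymp 1/\log K$. So for large $m$ they cannot occupy a $1/C$-proportion of $[1,A]$ once $K\ge\exp(8C)$.

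The paper exploits exactly this.
\begin{itemize}
\item Take $\mathcal P$ to be the primes up to $\exp(8C)$, so that $\prod_{p\in\mathcal P}(1-1/p)\le1/(8C)$. Write $a=s(a)r(a)$ with $s(a)$ the $\mathcal P$-smooth part.
\item The Rankin bound $\sum_{s>S}1/s\le S^{-1/2}\prod_{p\in\mathcal P}(1-p^{-1/2})^{-1}$, over $\mathcal P$-smooth $s$, shows that for a suitable $S=S(C)$ at most $m/4$ speeds have $s(a_v)>S$ or $s(b_v)>S$.
\item Assume $A\ge B$, so $A\ge\sqrt m$. The rough pairs $(r(a_v),r(b_v))$ then take at most $\bigl(A\prod_{p\in\mathcal P}(1-1/p)+2^{|\mathcal P|}\bigr)B\le m/8+C2^{|\mathcal P|}\sqrt m$ values.
\item Two surviving speeds with the same rough pair have ratio $s(a_u)s(b_v)/(s(a_v)s(b_u))$, hence height at most $S^2$.
\item So for $m\ge N_0(C)$, every set with pairwise heights $>S^2$ has at most $m/2$ elements.
\item A maximal matching in the graph of heights $\le S^2$ leaves an independent set unmatched, so it has at least $m/4$ edges. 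This gives $R_U(1/2m)\ge1/(4S^2)$.
\item For $m<N_0$, a single pair suffices, since its height is at most $AB\le Cm$.
\end{itemize}
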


\begin{proof}
Throughout this proof, $R_U,N_U,M_U$ are evaluated at the radius $1/2m$. The first step is to find a constant depending only on $C$, such that for sufficiently large $m$, any subset whose pairwise reduced ratio heights greater than this constant contains at most half of $U$.

Let $\mathcal P$ consists of primes up to $X=\lceil\exp(8C)\rceil$, and hence
\[
 d_{\mathcal P}=\prod_{p\in\mathcal P}(1-1/p)\le\frac{1}{8C}.
\]
Let
\[
A_{\mathcal P}=\prod_{p\in\mathcal P}(1-p^{-1/2})^{-1},\quad\text{and}\quad S=\left\lceil(8CA_{\mathcal P})^2\right\rceil. 
\]

For a positive integer $a$, write $a=s(a)r(a)$, where all prime factors of $s(a)$ lie in $\mathcal P$, and no prime in $\mathcal P$ divides $r(a)$. Thus $s(a)$ is the $\mathcal P$-smooth part of $a$. We first discard representations that the $\mathcal P$-smooth part is large. Observe that he smooth-part tail satisfies
\[
 \sum_{s>S,\ s\ \mathcal P\text{-smooth}}\frac1s
 \le\frac1{\sqrt S}
       \sum_{s\ \mathcal P\text{-smooth}}\frac1{\sqrt s}
 =\frac{A_{\mathcal P}}{\sqrt S}\le\frac1{8C}.
\]
This implies that at most $AB/(8C)\le m/8$ pairs $(a,b)$ in the box $[1,A]\times[1,B]$ have $s(a)>S$, as the number is at most $B\sum_{s>S}\lfloor A/s\rfloor$. Similarly, at most $m/8$ have
$s(b)>S$. 

Now consider a subset $U_0$ of $U$ such that any pair of elements in $U_0$ has reduced ratio height greater than $S^2$. Now we discard elements $u=\xi a_u/b_u$ in $U_0$ that at least one element in its numerator denominator pair $(a_u,b_u)$ having $\mathcal P$-smooth part greater than $S$. So we discard at most $m/4$ elements. For the remaining pairs $(a,b)$, we claim that the pairs $(r(a),r(b))$ are distinct. Indeed, if two distinct speeds $u_1,u_2$ in $U_0$ with $(a_{u_1},b_{u_1})$ and $(a_{u_2},b_{u_2})$ give the same rough pair, their ratio is
\[
\frac{u_1}{u_2} = \frac{s(a_{u_1})s(b_{u_2})}{s(a_{u_2})s(b_{u_1})}
\]
and the reduced numerator and denominator are both at most $S^2$.

To bound $|U_0|$, it remains to count these distinct pairs. Without loss of generality we assume $A\ge B$. As the $m$ speeds in $U$ require $m$ distinct representing pairs, hence $AB\geq m$ and $A\ge m^{1/2}$. By inclusion-exclusion principle, at most $A d_{\mathcal P}+2^{|\mathcal P|}$ integers in $[1,A]$ are coprime to every prime in $\mathcal P$. Indeed, the main term is clearly $A d_{\mathcal P}$, and each of the $2^{|\mathcal P|}$ floor terms contributes an error at most one. There are at most $B$ possible values of $r(b)$. Hence the number of remained pairs is at most
\[
 (A d_{\mathcal P}+2^{|\mathcal P|})B \le\frac m8+\frac{Cm2^{|\mathcal P|}}{A} \le\frac m8+C2^{|\mathcal P|} m^{\frac12}.
\]
Now let $N_0 = \left\lceil(8C2^{|\mathcal P|})^2\right\rceil$. 
For $m\ge N_0$, the right hand side bound in the above inequality is at most $m/4$. Together with two discarded class with at most $m/4$ speeds, we conclude that any subset of $U$ with pairwise reduced ratio heights greater than $S^2$ has size at most $m/2$, as claimed.

We now turn this count into overlap estimates of Type I. Construct a graph $G$ with vertex set $V(G)=U$ and we join two speeds by an edge when their reduced ratio height is at most $S^2$. Let $\mathscr M$ be a maximal matching of $G$, and write $U' = U\setminus V(\mathscr M)$. Clearly $U'$ is an independent set in $G$. Using the bound in the the preceding claim, when $m>N_0$, the independent set has size at most $m/2$, which implies that $\mathscr M$ contains at least $m/4$ edges. At the radius level $1/2m$, the intersections at the common centers of each
edge have total measure at least $1/(mS^2)$.

We claim that summing these measures provides a lower bound on the Type I loss. To see this, fix a time $t$ and a rational center, and suppose that $k$ of the original intervals at that center contain the time $t$. Type I overlaps removes $k-1$ of their contributions when $k\ge1$. On the other hand, each speed belongs to at most one pair in the matching. Thus the $k$ intervals can create for at most $\lfloor k/2\rfloor$ in pairs, which are edges in the matching. As $\lfloor k/2\rfloor\le(k-1)_+$, summing over centers and integrating gives $R_U\ge1/(4S^2)$.

For the small case that $2\le m<N_0$, one pair is actually enough. For $(u,w)$, its ratio is $a_vb_w/(a_wb_v)$, so its reduced ratio height is at most $AB\le Cm$. The intersections at its common centers therefore have total measure at least $1/(Cm^2)\ge1/(C N_0^2)$ and are bounded above by $R_U$ in the same way.

Now we define
\begin{equation}\label{eq: value of kappa}
    \kappa(C)=\min\{1/(4S^2),\,1/(C N_0^2)\}.
\end{equation}
Note that $\kappa$ only depends on $C$. This proves the first part of the lemma that 
\[
R_U\big(\frac{1}{2m}\big) \geq \kappa(C). 
\]

Finally, at the baseline $\int N_U=1$, and removing Type I overlaps preserves the Bohr union, as the uncovered measure equals the length removed at common centers (Type I), plus the overlap remaining among the remained intervals (Type II), with both counted with multiplicity. Therefore,
\[
 g(U)=R_U+\int(M_U-1)_+\ge R_U,
\]
which proves the second part of the lemma.
\end{proof}

We remark that the proof and \eqref{eq: value of kappa} gives an explicit choice of $\kappa(C)$.

The preceding lemma treats a whole family inside one rational box. We next record a lemma proves a similar conclusion when the set is inside a family of rational boxes.
More generally, the summation of Type I overlaps losses from disjoint subsets is at most the Type I overlaps loss of their union.

\begin{lemma}\label{lem:Type I superadditive}
Let $U_1,\ldots,U_k$ be pairwise disjoint subfamilies of $V$ of distinct positive integer speeds. For $0<\delta<1/2$,
\[
 R_V(\delta)\ge\sum_{j=1}^kR_{U_j}(\delta).
\]
\end{lemma}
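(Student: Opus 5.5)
The plan is to prove the inequality pointwise, center by center, and then integrate. Write $N^{(j)}_\delta$ and $M^{(j)}_\delta$ for the counting functions of $U_j$. It suffices to show that for almost every $t\in\T$
\[
 N_\delta(t)-M_\delta(t)\ \ge\ \sum_{j=1}^k\bigl(N^{(j)}_\delta(t)-M^{(j)}_\delta(t)\bigr).
\]
Integrating this over $\T$ then gives the lemma.

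Both sides decompose as sums over rational centers $a/q$. For a fixed center, consider the nested intervals of speeds of $V$ divisible by $q$. Their contribution to $N-M$ at $t$ is the number of such intervals containing $t$, minus the indicator of the largest one. Since the family is nested, this equals $(k_t-1)_+$, where $k_t$ is the number of intervals at that center containing $t$.

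Similarly, the contribution of $U_j$ at that center is $(k^{(j)}_t-1)_+$, where $k^{(j)}_t$ counts the intervals from speeds of $U_j$. The nesting is essential here, because it makes the largest interval of $U_j$ contain $t$ whenever any interval of $U_j$ does.

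Because the $U_j$ are disjoint subsets of $V$, we have $\sum_j k^{(j)}_t\le k_t$. The desired pointwise bound then reduces to the elementary superadditivity
\[
 \Bigl(\sum_j x_j-1\Bigr)_+\ \ge\ \sum_j (x_j-1)_+
\]
for nonnegative integers $x_j$, together with the monotonicity of $x\mapsto(x-1)_+$. The superadditivity holds because each $x_j\ge1$ contributes exactly $x_j-1$ to the right-hand side. If no $x_j$ is positive, the right-hand side is zero. If some $x_j$ is positive, the left-hand side is at least $\sum_{x_j\ge1}x_j-1\ge\sum_{x_j\ge1}(x_j-1)$.

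Alternatively, one could iterate the pointwise inequality of Lemma~\ref{lem: type I loss monotone}, adding $U_2,\dots,U_k$ to $U_1$ one at a time. That only gives monotonicity, though, not additivity, so the center-wise computation above is the real content.

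There is no genuine obstacle. The only point needing care is to ignore the finitely many endpoints, as the paper does, and to note that intervals at distinct lifts of the same center are disjoint since $\delta<1/2$. This makes the center-by-center decomposition of $N-M$ legitimate.
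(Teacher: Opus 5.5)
Your proof is correct and is essentially the paper's argument: a center-by-center pointwise comparison using the nesting of intervals at a common center, where your superadditivity $\bigl(\sum_j x_j-1\bigr)_+\ge\sum_j(x_j-1)_+$ is the paper's inequality $\sum_jN_j-\max_jM_j\ge\sum_j(N_j-M_j)$ in another form, followed by monotonicity to absorb the speeds outside $\bigcup_jU_j$, and then summing over centers and integrating. (Your remark about disjoint lifts is unnecessary, since the decomposition of $N_\delta-M_\delta$ over centers holds by definition, but it does no harm.)
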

\begin{proof}
Fix a rational center. Let $N_j(t)$ count the intervals from $U_j$ at this center that contain $t$, and let $M_j$ be the indicator function of their union. Both are zero if there is no such interval. The intervals have the same center and are therefore nested. The loss from Type I overlaps from all these intervals is pointwisely
\[
 \sum_jN_j-\max_jM_j\ge\sum_j(N_j-M_j).
\]
Adding the speeds outside $\bigcup_jU_j$ can only increase this loss. Summing over centers and integrating proves the lemma. 
\end{proof}

Hence by applying Lemma~\ref{lem:dense rational box} to $U_j$ at its own radius $1/(2m_j)$. Type I loss is exactly linear in the radius by Lemma~\ref{lem: type I loss monotone}, so
\[
 R_{U_j}\big(\frac{1}{2n}\big)=\frac{m_j}{n}R_{U_j}\big(\frac{1}{2m_j}\big)
 \ge\frac{m_j}{n}\kappa(C_0)
\]
whenever each $U_j$ is contained in a rational box of area at most $C_0 m_j$.  Now sum over $j$ gives
\[
R_V\big(\frac{1}{2n}\big)\ge\kappa(C_0)  \frac{\sum_jm_j}{n}.
\]
In particular, a positive proportion of the speeds in one rational box gives a fixed constant improvement in the lonely runner bound, regardless of the remaining speeds.

\begin{corollary}[A positive proportion of speeds in a rational box]
\label{cor:rational-box-gain}
Let $|V|=n$, and let $U\subseteq V$ have $m\ge2$ speeds all lying in a rational box of area at most $Cm$. Then
\[
 g(V)\ge R_V\big(\frac{1}{2n}\big)\ge\frac mn\kappa(C).
\]
Consequently
\[
 \ML(V)>
 \frac{1+(m/n)\kappa(C)/2}{2n}.
\]
The other speeds in $V\setminus U$ are arbitrary.
\end{corollary}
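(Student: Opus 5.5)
The plan is to chain three earlier results: the scaling identity for the Type I loss, the monotonicity under adding speeds, and the base identities at the baseline radius. Throughout, write $\delta_0=1/2n$ and $\delta_U=1/2m$.

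The first step applies Lemma~\ref{lem:dense rational box} to $U$. Its members are $m\ge2$ distinct positive integers lying in a rational box of area at most $Cm$, so it gives $R_U(\delta_U)\ge\kappa(C)$.

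The second step rescales to the radius $\delta_0$. By Lemma~\ref{lem: type I loss monotone}, $R_U(\delta)$ is linear in $\delta$ with no constant term: each center contributes $2\delta$ times a $\delta$-independent bracket. Since $m\le n$, we have $\delta_0\le\delta_U<1/2$, so both radii lie in the admissible range. Hence
\[
R_U(\delta_0)=\frac{\delta_0}{\delta_U}R_U(\delta_U)=\frac mn R_U(\delta_U)\ge\frac mn\kappa(C).
\]

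The third step passes from $U$ to $V$. Lemma~\ref{lem: type I loss monotone} gives the pointwise inequality $N_{U,\delta_0}-M_{U,\delta_0}\le N_{\delta_0}-M_{\delta_0}$, and integrating yields $R_V(\delta_0)\ge R_U(\delta_0)$. Equivalently, one can use Lemma~\ref{lem:Type I superadditive} with the single family $U_1=U$. Lemma~\ref{lem: base identities} then gives $g(V)\ge R_V(\delta_0)$, which establishes the first chain of inequalities for arbitrary speeds in $V\setminus U$.

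The last step converts measure into loneliness, and this is the only point needing care. Set $\eta=(m/n)\kappa(C)$, so that $g(V)\ge\eta$. Take $\delta=\delta_0(1+\eta/2)$, and assume $\delta<1/2$; otherwise $\ML(V)$ is handled by a trivial adjustment, since $\eta\le1$. Enlarging the radius from $\delta_0$ to $\delta$ adds measure to each Bohr set, and the union of these additions has measure at most
\[
\sum_{v\in V}2(\delta-\delta_0)=2n\delta_0\cdot\frac\eta2=\frac\eta2.
\]
So the complement of $\bigcup_vB(v;\delta)$ has measure at least $g(V)-\eta/2\ge\eta/2>0$. Any point $t$ in this complement satisfies $\|vt\|>\delta$ for all $v\in V$, and therefore
\[
\ML(V)>\frac{1+(m/n)\kappa(C)/2}{2n}.
\]
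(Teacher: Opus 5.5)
Your proposal is correct and matches the paper's argument: apply Lemma~\ref{lem:dense rational box} at radius $1/2m$, rescale using the exact linearity of the Type I loss, pass from $U$ to $V$ by monotonicity (the paper uses Lemma~\ref{lem:Type I superadditive} with one subfamily), invoke $g(V)\ge R_V(1/2n)$, and enlarge the radius by the factor $1+\eta/2$ with the union bound. Your caveat about $\delta<1/2$ is never needed, since $n\ge m\ge2$ and $\eta\le g(V)<1$ already give $\delta<3/8$.
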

\begin{proof}
The lower bound for $R_V$ follows from Lemma~\ref{lem:Type I superadditive} with one subset. The inequality $g(V)\ge R_V(1/2n)$ is Lemma~\ref{lem: base identities}. Write $\alpha=(m/n)\kappa(C)$, when the radius increases from $1/2n$ to $(1+\alpha/2)/(2n)$, the sum of the measures of the Bohr sets increases by $\alpha/2$. Their union can increase by no more than this amount. Its complement therefore still has measure at least $\alpha/2>0$, which gives the strict inequality for $\ML(V)$.
\end{proof}

\section{Valuation concentration and the large GCD estimate}
\label{sec:large GCD}

In this section, we recall two results of Green and Walker~\cite{GreenWalker}. The first controls the exponents of a prime in a collection of pairs of integers, and the second bounds the sizes of two sets when many pairs have a large common divisor. These are the two inputs used for finding the rational box in the next section.

The first result is a concentration estimate for a probability measure on $\mathbb Z^2$. We state it in terms of probability vectors, as this is the form used below.

\begin{lemma}[Green--Walker]\label{gwself:concentration}
For every $2<Q<3$, there is a constant $C_Q>0$ with the following property. Let $(\alpha_i)_{i\in\mathbb Z}$ and $(\beta_j)_{j\in\mathbb Z}$ be finitely supported sequences of nonnegative numbers with
\[
 \sum_i\alpha_i=\sum_j\beta_j=1.
\]
Suppose that a finitely supported probability measure $\mu$ on $\mathbb Z^2$ satisfies
\[
 \mu(i,j)\le c\lambda^{|i-j|}
                 (\alpha_i\beta_j)^{1-1/Q},
\]
where $0<c\le1$ and $0<\lambda\le4/5$. Then $c\ge1/9$, and there is an integer $k$ such that
\[
 \sum_{|i-k|+|j-k|\ge2}\mu(i,j)   \le C_Q\lambda^{2Q-2}.
\]
If both sequences are supported on nonnegative indices, then $k$ can be chosen nonnegative.
\end{lemma}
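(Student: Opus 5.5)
This is the concentration lemma from Green--Walker, and I would prove it along their lines. The argument works by induction on the support, using an entropy-type functional and the weight exponent $1-1/Q$.

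\emph{Step 1: the bound $c\ge 1/9$.} Summing the hypothesis over $\Z^2$ gives
\[
1=\sum_{i,j}\mu(i,j)\le c\sum_{i,j}\lambda^{|i-j|}(\alpha_i\beta_j)^{1-1/Q}.
\]
I would bound the right-hand side by Schur's test. Hölder in the form $x^{1-1/Q}y^{1-1/Q}\le \tfrac12(x^{2-2/Q}+y^{2-2/Q})$ is one way in. The cleaner way is to view the sum as a bilinear form with the Toeplitz kernel $\lambda^{|i-j|}$, whose row sums are at most $(1+\lambda)/(1-\lambda)\le 9$ for $\lambda\le 4/5$. Since $\sum_i\alpha_i^{1-1/Q}$ is not directly bounded, one cannot simply bound it by $1$. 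Instead I would use the shape of the measure: $\mu$ is a probability measure dominated by the kernel. Green--Walker's device is to compare $\mu$ with its marginals. Writing $\mu_1(i)=\sum_j\mu(i,j)$ and using $\mu\le$ kernel, one gets $\mu_1(i)\le c\,\alpha_i^{1-1/Q}\cdot(\ldots)$. The argument then closes by an $L^Q$ duality between $\alpha^{1-1/Q}$ and $\alpha^{1/Q}$, which is exactly why the exponent $1-1/Q$ is chosen. I would follow their Lemma on ``quality'' $\sum\mu^Q/(\alpha\beta)^{Q-1}$ essentially verbatim.

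\emph{Step 2: concentration near the diagonal.} Since the support is finite and $\lambda^{|i-j|}$ decays, the mass with $|i-j|\ge 1$ is bounded by $O(\lambda)$ times the diagonal-type mass. The claimed bound needs the sharper $\lambda^{2Q-2}$, coming from the $L^Q$ form of the estimate.

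\emph{Step 3: locating $k$.} The remaining task is to show that the near-diagonal mass concentrates at a single $k$. For this I would prove a one-dimensional inequality: if $\sum\alpha=1$ and the diagonal mass is spread over two indices, then $\sum_i \alpha_i^{1-1/Q}\beta_i^{1-1/Q}$ loses a definite factor. This is strict concavity of $x\mapsto x^{2-2/Q}$ for $Q<3$, where $2-2/Q<4/3$, and the restriction $2<Q<3$ enters here. The loss is then absorbed against the $c\ge1/9$ budget from Step 1, forcing all but $C_Q\lambda^{2Q-2}$ of the mass into $\{|i-k|+|j-k|\le1\}$.

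\emph{Nonnegativity of $k$.} If both sequences are supported on $i,j\ge0$, then $\mu$ vanishes where $\alpha_i\beta_j=0$. If the optimal $k$ were negative, the neighbourhood of $k$ would carry no mass, contradicting the concentration for small enough $C_Q\lambda^{2Q-2}$. When that quantity is large the conclusion is trivial for any $k$, in particular $k=0$.

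\emph{Main obstacle.} I expect Step 3, the quantitative concentration with the exact exponent $2Q-2$ and a constant uniform in the supports, to be the main difficulty. In practice I would simply cite Green--Walker's proposition and check that their hypotheses (probability vectors, $\lambda\le4/5$, $2<Q<3$) match the statement here.
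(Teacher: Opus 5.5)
Your fallback, citing Green--Walker, is exactly the paper's proof, and it is the only part of the proposal that actually establishes the concentration statement. However, checking the hypotheses is a reparametrization, not a literal match. Green--Walker's Lemma~2.1 is phrased for sequences $x_i,y_j\ge0$ normalized by $\sum_i x_i^t=\sum_j y_j^t=1$ with $t=Q/(Q-1)$. The paper sets $x_i=\alpha_i^{1/t}=\alpha_i^{1-1/Q}$ and $y_j=\beta_j^{1-1/Q}$, so that $c\lambda^{|i-j|}x_iy_j$ is the bound here. It takes their $\varepsilon=Q-2$, so their error exponent $Q+\varepsilon$ becomes $2Q-2$. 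Your treatment of $k\ge0$ is correct and slightly different from the paper's. For $k<0$, every point with $|i-k|+|j-k|\le1$ has a negative coordinate and carries no mass, so either $C_Q\lambda^{2Q-2}\ge1$ (where $k=0$ works trivially) or you get a contradiction. The paper instead replaces $k$ by $\max\{k,0\}$, using $i+j\le|i-k|+|j-k|$ whenever $i,j\ge0>k$. Both work.

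The self-contained sketch in Steps~1--3 does not stand as a proof, and parts of it are wrong.

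\emph{Step~1} is easier than you suggest: the AM--GM inequality you wrote already finishes it. What it produces is $\sum_i\alpha_i^{2-2/Q}$, not $\sum_i\alpha_i^{1-1/Q}$, and $\sum_i\alpha_i^{2-2/Q}\le\sum_i\alpha_i=1$ because $2-2/Q>1$. Combined with $\sum_\ell\lambda^{|\ell|}=(1+\lambda)/(1-\lambda)\le9$, this gives $1\le 9c$ at once, with no marginal or duality device.

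\emph{Step~3} has two errors. First, $x\mapsto x^{2-2/Q}$ is convex, not concave. Second, the concentration is driven by the hypothesis $c\le1$, not by $c\ge1/9$. Indeed, $c\le1$ forces $S=\sum_{i,j}\lambda^{|i-j|}(\alpha_i\beta_j)^{1-1/Q}\ge1$, whereas the diagonal part of $S$ is at most $1$, with equality only for a common point mass. Heuristically, moving mass $\eta$ of both $\alpha$ and $\beta$ from $k$ to $k+1$ gains at most $2\lambda\eta^{1-1/Q}$ in $S$ but loses about $(2-2/Q)\eta$ on the diagonal. Hence $\eta\lesssim\lambda^{Q}$ and $\mu(k+1,k+1)\le\eta^{2-2/Q}\lesssim\lambda^{2Q-2}$; this is where the exponent comes from.

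\emph{Step~2} cannot produce this exponent. The crude kernel bound behind it gives only $O(\lambda^2)$ for the mass with $|i-j|\ge2$, which is weaker than $\lambda^{2Q-2}$. It also says nothing about diagonal points such as $(k+1,k+1)$, which lie outside the allowed neighbourhood.

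Making this tradeoff rigorous, with a constant $C_Q$ uniform over all supports, is the real content of Green--Walker's lemma. Your sketch does not supply it, so the citation is doing all the work.
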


This is exactly \cite[Lemma~2.1]{GreenWalker}. To match their notation, we may choose $t=Q/(Q-1)$, $x_i=\alpha_i^{1/t}$, and $y_j=\beta_j^{1/t}$. Then $\sum_i x_i^t=\sum_j y_j^t=1$. Their parameter is $\varepsilon=Q-2$, so their exponent $Q+\varepsilon$ is $2Q-2$. For the final claim, we may replace $k$ by $\max\{k,0\}$ if necessary, as this does not increase $|i-k|+|j-k|$ on the support of $\mu$.

Let us now fix the notation. Let $X,Y$ be finite nonempty sets of positive integers, and let $D>0$. Define a \emph{GCD graph} $G$, which is a bipartite graph with vertex set $X\cup Y$, and two vertices $(x,y)$ are adjacent when $\gcd(x,y)>D$. We define the \emph{edge density} of $G$ to be
\[
 \rho=\frac{|E(X,Y)|}{|X||Y|}.
\]

We next record the large GCD estimate.  The following form follows from their theorem by an elementary reduction.

\begin{theorem}[Green--Walker]\label{gwself:gcd}
Let $q_0>2$. There is an absolute constant $K_0(q_0)\ge1$ with the following property. Let $A,B\ge1/2$ and $D>0$, and let $X\subseteq[A,2A]$ and $Y\subseteq[B,2B]$ be finite nonempty sets of positive integers. Suppose that a nonempty collection $E\subseteq X\times Y$ has density $\rho$ and satisfies $\gcd(x,y)>D$ for every $(x,y)\in E$. Then
\[
 \rho^{q_0}|X||Y|\frac{D^2}{AB}\le K_0.
\]
\end{theorem}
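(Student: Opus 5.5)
The plan is to derive this directly from the main theorem of Green and Walker~\cite{GreenWalker}. In its standard form, that theorem says the following. For every $\varepsilon>0$ there is $C_\varepsilon$ such that, if $X\subseteq[A,2A]$ and $Y\subseteq[B,2B]$ are sets of positive integers and at least $\rho|X||Y|$ pairs $(x,y)\in X\times Y$ satisfy $\gcd(x,y)\ge D$, then
\[
 |X||Y|\le C_\varepsilon\,\rho^{-2-\varepsilon}\,\frac{AB}{D^2}.
\]
The argument is only bookkeeping: matching parameters and disposing of the degenerate ranges not covered by their hypotheses.

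\emph{Parameter matching.} Set $\varepsilon=q_0-2>0$ and $K_0(q_0)=\max\{C_\varepsilon,K_1\}$, where $K_1$ is an absolute constant fixed in the degenerate case below.
\begin{itemize}
\item The collection $E$ consists of exactly $\rho|X||Y|$ pairs. Each satisfies $\gcd(x,y)>D$, hence $\gcd(x,y)\ge D$, so the density hypothesis holds with this $\rho$.
\item Multiplying the displayed conclusion by $\rho^{q_0}D^2/(AB)$ gives $\rho^{q_0}|X||Y|D^2/(AB)\le C_\varepsilon$, since $\rho^{q_0}\cdot\rho^{-2-\varepsilon}=1$.
\item Since $\rho\le 1$, replacing $\rho$ by a larger exponent only helps. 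So if their theorem is stated with a specific exponent, or only for small $\varepsilon$, we may take any admissible $\varepsilon\le q_0-2$.
\end{itemize}

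\emph{Degenerate ranges.}
\begin{itemize}
\item \emph{Small $D$.} If their statement requires $D\ge1$ (or $D$ large), treat $0<D<1$ separately. Since $X$ consists of integers in $[A,2A]$ with $A\ge1/2$, we have $|X|\le A+1\le3A$, and likewise $|Y|\le3B$. Hence $\rho^{q_0}|X||Y|D^2/(AB)\le 9$, so $K_1=9$ suffices. The same trivial bound handles any bounded range of $D$, at the cost of a constant.
\item \emph{Large $D$.} Since $E\ne\emptyset$, some pair has $D<\gcd(x,y)\le\min(x,y)\le 2\min(A,B)$. So $D^2\ll AB$ automatically, and any implicit size constraint relating $D$, $A$ and $B$ in their theorem is satisfied.
\item \emph{Normalisation of $A,B$.} If their dyadic intervals are normalised differently (for example $[A,2A)$, or $A\ge1$), cover $[A,2A]$ by at most two of their intervals. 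Split $X$ and $Y$ accordingly and apply the theorem to the piece carrying the most edges. This changes $\rho$ and the sizes by factors of at most $4$, which is absorbed into $K_0$.
\end{itemize}

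\emph{Main obstacle.} There is no real mathematical difficulty. The only care needed is to state exactly which version of the Green--Walker theorem is invoked (their Theorem~1.2 form, with density $\delta$ and exponent $2+\varepsilon$), and to check that the passage from $\ge D$ to $>D$ and the edge cases $D<1$, $A=1/2$ are covered. I would write the proof as the short reduction above: quote the theorem, set $\varepsilon=q_0-2$, and add one line each for the trivial bound $|X|\le3A$ and for splitting dyadic ranges.
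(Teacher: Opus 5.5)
Your reduction follows the same route as the paper: quote Green--Walker's theorem with $\varepsilon=q_0-2$, and dispose of $D<1$ by the trivial count $|X|\ll A$, $|Y|\ll B$. One step fails as written, namely your ``large $D$'' bullet.

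The form of Green--Walker being invoked requires $A,B,D\ge1$ and $D\le\min\{A,B\}$. Nonemptiness of $E$ only gives $D<\gcd(x,y)\le 2\min\{A,B\}$. So for $\min\{A,B\}<D<2\min\{A,B\}$ the hypothesis is \emph{not} satisfied, and the observation $D^2\ll AB$ does not rescue it.

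The missing line is the paper's. Apply the theorem with $D'=\min\{D,A,B\}$ in place of $D$.
\begin{itemize}
\item Every edge still has $\gcd(x,y)>D\ge D'$, so the density hypothesis holds with the same $\rho$.
\item The inequality above gives $\min\{A,B\}>D/2$, hence $D'\ge D/2$.
\item $D'\ge1$ once $D\ge1$ and $A,B\ge1$.
\end{itemize}
This yields $|X||Y|\le C\rho^{-q_0}AB/(D')^2\le 4C\rho^{-q_0}AB/D^2$, so $K_0=\max\{4C,9\}$ works with your cruder trivial bound. The paper gets $4C$ using $|X|\le 2A$.

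Two smaller remarks. First, the condition $A,B\ge1$ needs no dyadic renormalisation. Once $D\ge1$, if $A<1$ then $X\subseteq[A,2A]$ forces $X=\{1\}$, and no pair has $\gcd>1$, contradicting $E\neq\emptyset$. So your normalisation bullet can be dropped.

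Second, if you do split $X$ and $Y$ into pieces, it is not true that the sizes change by bounded factors, since a sub-piece may be far smaller. What is preserved is the combined quantity $\rho^{q_0}|X||Y|=|E|^{q_0}(|X||Y|)^{1-q_0}$. Passing to a piece carrying at least $|E|/4$ edges decreases it by at most a factor $4^{q_0}$, because $q_0>1$.
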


\begin{proof}
By \cite[Theorem~1.1]{GreenWalker}, with $\varepsilon=q_0-2$, there is an absolute constant $C\ge1$ such that
\[
 |X||Y|\le C\rho^{-q_0}\frac{AB}{D^2}
\]
when $A,B,D\ge1$ and $D\le\min\{A,B\}$. Indeed, the number of small primes in their statement is bounded in terms of $\varepsilon$ alone. We show that $K_0=4C$ works on the larger parameter range.

Suppose $D<1$. As $|X|\le2A$ and $|Y|\le2B$, so
\[
 \rho^{q_0}|X||Y|\frac{D^2}{AB}\le4\le K_0.
\]
We may therefore assume that $D\ge1$. Since $E$ is nonempty, we must also have $A,B\ge1$. For example, if $A<1$, then $X=\{1\}$, contradicting $\gcd(x,y)>D\ge1$ on any pair in $E$.

Set $D'=\min\{D,A,B\}$. A pair $(x,y)\in E$ gives
\[
 D<\gcd(x,y)\le\min\{x,y\}\le2\min\{A,B\},
\]
so $D'\ge D/2$. The parameters $A,B,D'$ satisfy the hypotheses of Green--Walker's theorem. Applying it with $D'$ in place of $D$, we obtain
\[
 |X||Y|\le C\rho^{-q_0}\frac{AB}{(D')^2}
          \le4C\rho^{-q_0}\frac{AB}{D^2},
\]
as required.
\end{proof}

We will use $K_0=K_0(9/4)$ in later proofs for the absolute constant in the Green--Walker theorem and $C_Q$ for the constant in Lemma~\ref{gwself:concentration}.

\section{Extracting a large rational box}\label{sec:linear ratio sparsity}

The large GCD bound in Section~\ref{sec:large GCD} controls the size of a family with many pairs having a large common divisor. We need a little more: a positive proportion of the speeds must lie in a rational box with its area is comparable to its size. The difficulty is that repeated restriction to valuation classes may leave very few speeds. We address this issue in the section by slightly modify Green--Walker's argument \cite{GreenWalker} and include the remained cardinality in the quantity we maximize.  

Throughout the section, we will use the two estimates proved in Section~\ref{sec:large GCD}. Theorem~\ref{gwself:gcd}, with $q_0=9/4$ and Lemma~\ref{gwself:concentration} with an exponent $Q$ different from $q_0$, and remain the notation $C_Q$ for its constant. We will apply Lemma~\ref{gwself:concentration}  on GCD graph defined on $X\times Y$, after partition vertices into $p$-adic valuation slices for some prime $p$. Here for a prime $p$, define the \emph{$p$-adic valuation} $\nu_p(x)$ to be the exponent of $p$ in $x$ where $x\in\Z$, and extend this notation to positive rationals by
\[
\nu_p\big(\frac xy\big)=\nu_p(x)-\nu_p(y).
\]
By a $p$-adic valuation slice of $X$ we meant
\[
X_i=\{x\in X:\nu_p(x)=i\}. 
\]

Let us fix the choices of parameters. The exponent $q>q_0$ so the large GCD bound will control the remained edge density. The positive exponent $\sigma$ penalizes a loss of cardinality.
We choose them so that the errors from the concentration lemma are summable over the primes. Here we set
\[
 q=5/2,\qquad \sigma=1/20,\qquad Q=\frac q{1+\sigma}=50/21,
 \qquad\zeta=\frac{2Q-2}{q}=116/105>1.
\]
Choose an integer $p_0$ sufficiently large that
\[
 C_Q\sum_{p>p_0}p^{-\zeta}\le1/2.
\]
For example, it suffices to take
\[
 p_0=\left\lceil\max\left\{2,
       \left(\frac{2C_Q}{\zeta-1}\right)^{1/(\zeta-1)}
                  \right\}\right\rceil.
\]
Let $K=1000$. The choice of these parameters is not unique and we did not optimize them in this paper, the point is that all these constants are absolute.

\subsection{Extraction with a cardinality penalty}

The next proposition makes the rational box extraction quantitative. Its proof modifies the prime restriction in \cite[Section~3]{GreenWalker} by maximizing a quantity that also records how many elements remain. Once the valuations concentrate, the counting argument of \cite[Lemma~4.1]{GreenWalker} produces the required rational box.

\begin{proposition}\label{mp:extraction}
Let $X_0\subset[A_0,2A_0]$, $Y_0\subset[B_0,2B_0]$ be finite nonempty sets of positive integers, and let $E_0\subset X_0\times Y_0$ have density $\rho_0>0$ and satisfy $\gcd(x,y)>D_0>0$ for every edge. Suppose
\[
 F_0:=\rho_0^q|X_0||Y_0|\frac{D_0^2}{A_0B_0}K^{-\pi(p_0)}>0.
\]
There is a subset $U\subset Y_0$ such that
\[
 |U|\ge \frac14\left(\frac{F_0}{K_0}\right)^{24}|Y_0|
\]
and, for some positive rational $\xi$ and positive integers
$A_U,B_U$, every $u\in U$ has a representation $u=\xi a_u/b_u$ where $a_u\in [1,A_U]\cap\N$ and $b_u\in [1,B_U]\cap \N$, and
\[
 A_UB_U\le\frac{512}{F_0}|U|.
\]
\end{proposition}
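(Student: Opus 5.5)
We will run a modified Green--Walker prime-by-prime restriction, maximizing a penalized quality functional, and then read off a rational box once the valuations have concentrated. For a GCD graph $(X,Y,E)$ with parameters $(A,B,D)$ and a finite set $\mathcal R$ of primes already treated, define
\[
 F(X,Y,E)=\rho^q|X||Y|\frac{D^2}{AB}\Bigl(\frac{|Y|}{|Y_0|}\Bigr)^{\sigma q}K^{-\#\{p\le p_0:\,p\notin\mathcal R\}},
\]
so that $F$ equals $F_0$ at the start once the trivial initial factors are accounted for. The extra factor $(|Y|/|Y_0|)^{\sigma q}$ is the cardinality penalty. The choice $Q=q/(1+\sigma)$ lets the penalty combine with $\rho^q$ into an exponent $1-1/Q$ on the slice proportions in the hypothesis of Lemma~\ref{gwself:concentration}. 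We consider all graphs obtainable from $(X_0,Y_0,E_0)$ by finitely many steps of the following form: fix a prime $p$, restrict $X$ to a valuation slice $X_i$ and $Y$ to a slice $Y_j$, keep the induced edges, and replace $D$ by $D\,p^{-\min(i,j)}$ after dividing out the common factor $p^{\min(i,j)}$, updating $A$ and $B$ accordingly. Among these graphs we choose one maximizing $F$. Such a maximizer exists because only finitely many graphs arise, and it satisfies $F\ge F_0$.

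The first step is to show that at the maximizer every prime is concentrated. Fix a prime $p$ and write $\alpha_i=|X_i|/|X|$, $\beta_j=|Y_j|/|Y|$, and let $\mu(i,j)$ be the proportion of edges between $X_i$ and $Y_j$. Maximality says that replacing the graph by the $(i,j)$-slice does not increase $F$. Written out, and using that the GCD gains $p^{\min(i,j)}$ while the sizes shrink to $p^{\max(i,j)}$ scale, this becomes
\[
 \mu(i,j)\le c\,p^{-|i-j|/q}(\alpha_i\beta_j)^{1-1/Q},
\]
with $c\le1$, or $c\le K^{-1/q}$ for small primes because of the $K$ bookkeeping. Lemma~\ref{gwself:concentration} with $\lambda=p^{-1/q}\le4/5$ then gives two things. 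For $p\le p_0$ it yields a contradiction unless $p$ has already been treated, since $c\ge1/9$ is impossible when $K=1000$. For $p>p_0$ it shows that all but a proportion $C_Qp^{-\zeta}$ of the edges lie near a diagonal slice $(k,k)$. By the choice of $p_0$, the total loss summed over $p>p_0$ is at most $1/2$. Hence we may pass to a subgraph, still of density $\gtrsim\rho$, in which every edge $(x,y)$ has $|\nu_p(x)-\nu_p(y)|\le1$ and the valuations lie within $1$ of $k_p$ for every prime. The ratios $y/x$ therefore have bounded $p$-adic deviation from a common rational $\xi_p$.

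Next comes a lower bound on the size of $U$. We take $U$ to be the $Y$-side of this concentrated graph. Since $F\ge F_0$ and Theorem~\ref{gwself:gcd} gives $\rho^{q_0}|X||Y|D^2/(AB)\le K_0$, the gap between the exponents $q$ and $q_0$ bounds $\rho$ from below by a power of $F_0/K_0$. The penalty factor then gives $(|Y|/|Y_0|)^{\sigma q}\ge F_0/K_0$ up to that power, and together these give $|U|\ge\frac14(F_0/K_0)^{24}|Y_0|$. The factor $1/4$ accounts for the halving from the concentration loss and from the final passage to a subset.

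Finally we build the box, following the counting argument in \cite[Lemma~4.1]{GreenWalker}. Set $\xi=\prod_p p^{k_p}\cdot(\text{common scale})$. Each $u\in U$ then has the form $u=\xi a_u/b_u$, where $a_u$ and $b_u$ collect the bounded valuation deviations together with the part of $u$ not shared with its neighbours. The sizes obey $a_u\lesssim B/D$ and $b_u\lesssim$ the corresponding $X$-side scale, so $A_UB_U\lesssim AB/D^2$. Combining this with $F\ge F_0$ and $\rho\le1$ gives $A_UB_U\le 512|U|/F_0$. \textbf{The main obstacle} is the first step: checking that the cardinality-penalized maximality inequality produces exactly the hypothesis of Lemma~\ref{gwself:concentration} with exponent $1-1/Q$, including the $K^{-\pi(p_0)}$ bookkeeping for small primes. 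This is where I would do the exponent arithmetic most carefully.
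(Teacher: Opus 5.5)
\textbf{Genuine gap: the box area in the last step is too large by a factor of order $|X|$.} The setup up to concentration tracks the paper (modulo an exponent slip noted at the end). The failure is in taking $U$ to be the whole $Y$-side of the concentrated graph. Write $y=Ny_1/y_2$ in lowest terms, with $N=\prod_p p^{k_p}$. The only constraint you have on $y'=y_1y_2$ comes from a single edge: $x'y'=xy/\gcd(x,y)^2\le 4Q_0$ with $Q_0:=AB/D^2$. Hence $y'\le 4Q_0$, and the box has area $\asymp Q_0$, as you say.

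But $F\ge F_0$ only gives $Q_0\le\rho^q|X||Y|/F_0$. Theorem~\ref{gwself:gcd} gives the matching lower bound $Q_0\ge\rho^{q_0}|X||Y|/K_0$. So, up to powers of $F_0/K_0$, $Q_0$ is of order $|X||Y|$. Your bound $A_UB_U\lesssim Q_0$ therefore exceeds the target $512|U|/F_0$ by a factor of order $|X|$, which is unbounded. Appealing to $\rho\le 1$ cannot recover this.

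\textbf{The missing idea} is the counting step from the Green--Walker argument that saves this factor. Since $x\in[A,2A]$ and $x=Nx_1/x_2$, the condition $x_1x_2\le L$ forces $x_1\le(2AL/N)^{1/2}$ and $x_2\le(NL/A)^{1/2}$. So at most $2L$ vertices of $X$ have $x'\le L$. Now:
\begin{enumerate}
\item Take $L=\rho|X|/16$.
\item At least $\rho|X|/4$ vertices of $X$ have at least $\rho|Y|/4$ neighbours along concentrated edges, so some such vertex $x$ also has $x'>\rho|X|/16$.
\item Let $U$ be the neighbourhood of this $x$. Every $y\in U$ then has $y'\le 4Q_0/x'<64Q_0/(\rho|X|)$.
\item The box therefore has area $\lesssim Q_0/(\rho|X|)\lesssim Q_0|U|/(\rho^2|X||Y|)$.
\item Finally $Q_0/(\rho^2|X||Y|)=\rho^{q-2}(\text{penalty})K^{-r}/F_{\max}\le 1/F_0$, which gives $A_UB_U\le 2^9|U|/F_0$.
\end{enumerate}
This is also where $|U|\ge\rho|Y|/4$ and the factor $1/4$ come from.

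\textbf{Secondary issue: the penalty exponent.} With $(|Y|/|Y_0|)^{\sigma q}$, maximality gives
\[
\mu(i,j)\le c\,p^{-|i-j|/q}\,\alpha_i^{1-1/q}\,\beta_j^{1-1/q-\sigma}.
\]
Here $1-1/q-\sigma=11/20<29/50=1-1/Q$, so Lemma~\ref{gwself:concentration} does not apply with $Q=50/21$. Shrinking $Q$ to $20/9$ to make it apply gives $\zeta=44/45<1$, and then the loss summed over primes $p>p_0$ diverges. The fix is to use exponent $\sigma$ on $|Y|/|Y_0|$ (then use $\alpha_i\le 1$ on the $X$-side), or the paper's symmetric $z^\sigma$ with $z=|X||Y|/(|X_0||Y_0|)$. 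Either produces exactly $(\alpha_i\beta_j)^{1-1/Q}$.
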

\begin{proof}
We will apply a finite sequence of operations, starting with $A_0,B_0,E_0,X_0,Y_0,D_0$. Suppose at the current step, we have $A,B,E,X,Y,D$. Choose any prime $p$ dividing an element of the  set $X\cup Y$, and choose $p$-adic valuation slices $X_i,Y_j$ containing at least one edge. We first replace the two sets $X,Y$ by $p^{-i}X_i$ and $p^{-j}Y_j$. Keep the corresponding edges and replace
\[
 (A,B,D)\quad\text{by}\quad(A/p^i,B/p^j,D/p^{\min(i,j)}).
\]
The common divisor condition is preserved, and $p$ no longer divides any element of the new sets. As in the above operations, new prime appears, so there are only finitely many possible steps. 
Observe also that each resulting set is obtained from a subset of $X_0$ or $Y_0$ by one common dilation, and their cardinalities can only decrease.

For any such operation, write $a=|X|$, $b=|Y|$, and $a_0=|X_0|$, $b_0=|Y_0|$ for the starting sets, and let $\rho$ be the edge density on the GCD graph defined on $X\times Y$ with respect to $D$.
Let $z=ab/(a_0b_0)$ be the proportion of pairs remained from the initial sets, and let $r$ count the number of primes at most $p_0$ dividing an element of $X\cup Y$. 
Now we maximize
\[
 F=\rho^q ab\frac{D^2}{AB}\,z^\sigma K^{-r}
\]
over the all possible sequence of operations of finite lengths, and fix a maximizer $F_{\text{max}}$. The new factor $z^\sigma$ in our proof prevents us from improving the other factors at an uncontrolled cost in cardinality. At the initial step, $z=1$ and $r\le \pi(p_0)$, so the maximum is at least $F_0$.
On the other hand, Theorem~\ref{gwself:gcd} gives
\[
 F_{\text{max}}\le K_0\rho^{q-q_0}z^\sigma
       =K_0\rho^{1/4}z^{1/20}.
\]
Since $0<\rho,z\le1$, it follows  that
\begin{equation}\label{mp:massdensity}
 \rho\ge(F_0/K_0)^4,
 \qquad\text{and}\qquad z\ge(F_0/K_0)^{20}.
\end{equation}
In particular, we have $F_0/K_0\le1$.

We now apply the valuation argument to the maximizing sets. Fix a prime $p$ dividing one of their elements. Let $\alpha_i=|X_i|/a$, $\beta_j=|Y_j|/b$ be the densities of the $p$-adic valuation slices, and let $\mu(i,j)$ be the fraction of edges in $X_i\times Y_j$, that is
\[
\mu(i,j)=\frac{|E(G)\cap(X_i\times Y_j)|}{|E(G)|}.
\]
For nonzero $\mu(i,j)$, by restricting to induced subgraphs on these classes gives edge density $\rho_{ij}=\rho\mu(i,j)/(\alpha_i\beta_j)$ and remained
cardinality ratio $z_{ij}=z\alpha_i\beta_j$. Also
\[
 \frac{(D/p^{\min(i,j)})^2}{(A/p^i)(B/p^j)}
       =\frac{D^2}{AB}p^{|i-j|}.
\]
Let $F_{ij}$ be the quantity $F$ restricted on $(X_i,Y_j)$. Hence the maximality of $F_{\text{max}}$  gives
\[
 \frac{F_{ij}}{F_{\text{max}}}
 =\mu(i,j)^q(\alpha_i\beta_j)^{1+\sigma-q}
      p^{|i-j|}K^{r-r_{ij}}\le1,
\]
where $r_{ij}$ is defined similarly, that is the number of primes at most $p_0$ dividing an element of $X_i\cup Y_j$. 

Observe that if $p<p_0$, at least the prime $p$ is removed in $r_{ij}$ and hence $r_{ij}\leq r -1$. If $p\geq p_0$, the number of primes cannot increase and $r_{ij}\leq r $ trivially. In either cases we have
\[
r-r_{ij}\ge\mathbbm1_{p\le p_0}. 
\]
Therefore we have
\[
 \mu(i,j)\le
 K^{-\mathbbm1_{p\le p_0}/q}
 (\alpha_i\beta_j)^{(q-1-\sigma)/q}p^{-|i-j|/q}.
\]
The exponent on $\alpha_i\beta_j$ is exactly $1-1/Q$. Now we apply Lemma~\ref{gwself:concentration} with exponent $Q$ and $\lambda=p^{-1/q}\le2^{-2/5}<4/5$. 

If the prime $p<p_0$, by the choice of parameters we will have
\[
K^{-1/q} < \frac19
\]
contradicting the size restriction on coefficients in Lemma~\ref{gwself:concentration}. Thus no prime at most $p_0$ divides any element of the maximizing
sets.

For each prime $p>p_0$ dividing an element of $A\cup B$, Lemma~\ref{gwself:concentration} gives an integer $k_p\ge0$ such that all but at most $C_Qp^{-\zeta}|E|$ pairs
$(u,v)\in E$ satisfy
\begin{equation}\label{eq: square-free}
 |\nu_p(u)-k_p|+|\nu_p(v)-k_p|\le1.
\end{equation}
The sum of the discarded proportions is at most $1/2$, by our choice of $p_0$, so at least half of edges in $E$ remains. Let $N=\prod_p p^{k_p}$, with the product over the primes dividing the maximizing sets, and let $E'\subseteq E$ be the set of remaining edges and so $|E'|\geq |E|/2$. 

Now for every vertex incident to an edge in $E'$, write $v=Nv_1/v_2$ in lowest terms. The prime valuation condition \eqref{eq: square-free} shows that $v_1,v_2$ are square-free. Put $v'=v_1v_2$. On every edge $(u,v)\in E'$,
\begin{equation}\label{eq: good edges}
 u'v' =\frac{uv}{\gcd(u,v)^2}\le\frac{4AB}{D^2}=:4Q_0.
\end{equation}
For vertices $u\in[A,2A]$ incident to edges in $E'$ and satisfying $u'\le L$, one has
\[
 u_1\le \Big(\frac{2AL}{N}\Big)^{\frac12},\qquad\text{and}\qquad u_2\le \Big(\frac{NL}{A}\Big)^{\frac12}.
\]
As this representation is injective, we conclude that there are at most $\sqrt2L\le2L$ such vertices.

Next we show that if $u'$ is large, we are able to find many neighbors $v$ of $u$. This will force their products $v'$, and hence their numerators and denominators to be small. Let $G'$ be the subgraph of the GCD graph $G$ defined on the maximizing sets, such that $E(G')=E'$ and $V(G')=V(G)$. The edge density of $G'$ is at least  $\rho/2$. The degree count from Theorem~\ref{gwself:gcd} shows that at least $\rho a/4$ vertices of $X$ have at least $\rho b/4$ neighbors. Call this high degree subset of $X$ by $X'$ and hence $|X'|\geq \rho a/4$. 

Set $L=\rho a/16$. By the computation above, at most $2L=\rho a/8$ vertices $u\in X$ have $u'\le L$, thus at least $\rho a/8$ vertices in $X'$ satisfy $u'>L$. Fix such a vertex $u$, and let $U'$ be its neighbors. Then for every $v\in U'$ by \eqref{eq: good edges}
\[
 m:=|U'|\ge \frac{\rho b}{4},\qquad\text{and}
 \qquad v'\le\frac{64Q_0}{\rho a}.
\]

As every $v\in U'\subseteq Y$ satisfies $v\in [B, 2B]$, and $v=Nv_1/v_2$ in the reduced form. As a consequence, 
\[
v_1^2 = \frac{v}{N} v' \leq \frac{2^7 Q_0 B}{N\rho a},\qquad\text{and}\qquad v_2^2 = \frac{N}{v}v' \leq \frac{2^6NQ_0}{B\rho a}.
\]
Thus $U'$ is contained in a rational box
\[
\Big\{ N\frac{c}{d} : c,d\in\N \text{ and } c\leq \Big(\frac{2^7 Q_0 B}{N\rho a}\Big)^{\frac12}, d\leq \Big(\frac{2^6NQ_0}{B\rho a}\Big)^{\frac12} \Big\}.
\]
Its area is at most
\[
 2^{\frac{13}{2}}\frac{Q_0}{\rho a}
 \le 2^{\frac{17}{2}}\frac{Q_0}{\rho^2ab}\,m
 \le\frac{2^9}{F_0}m.
\]
The final inequality follows from $F_{\text{max}}\ge F_0$ and $Q_0/(\rho^2ab)=\rho^{q-2}z^\sigma K^{-r}/F_{\text{max}}\le1/F_0$.
Finally, we take integer floors of the two coordinate bounds to obtain positive integers $A_U,B_U$. They are positive because $U'$ is nonempty, and their product satisfies the same area bound.

Recall that $Y$ is obtained from $Y_0$ via dilations. Reverse the common dilation gives a subset $U$ of $Y$ and this changes only the common factor of the rational box.
Finally, $a/a_0\le1$ implies $b/b_0\ge z$. Hence
\eqref{mp:massdensity} gives
\[
 |U|=m\ge\frac{\rho z b_0}{4} \ge\frac14(F_0/K_0)^{24}|Y_0|,
\]
as required.
\end{proof}

\subsection{Pairs with reduced ratio of order $n$}

The extraction proposition in the previous subsection concerns two fixed dyadic intervals. To apply it to arbitrary speeds, we partition them into such intervals. There could be many intervals, each containing very few speeds, so a direct pigeonhole argument would lose the desired positive proportion. The large GCD estimate shows that these sets with few speeds account for few pairs of bounded reduced ratio. We may therefore find the required pair of intervals among the sets with many speeds.

\begin{theorem}[Many bounded reduced ratios force Type I loss]
\label{mp:linear Type I}
For every $C\ge1$ and $\varepsilon>0$, there is a constant $\kappa(C,\varepsilon)>0$ with the following property. If $V$ consists of $n\ge2$ distinct positive integers and has at least $\varepsilon n^2$ unordered pairs of reduced ratio height at most $Cn$, then
\[
  R_V\big(\frac{1}{2n}\big)\ge\kappa(C,\varepsilon).
\]
\end{theorem}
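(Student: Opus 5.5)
We plan to reduce Theorem~\ref{mp:linear Type I} to Proposition~\ref{mp:extraction} and then to Corollary~\ref{cor:rational-box-gain}. Partition $V$ into dyadic blocks $V_k=V\cap[2^k,2^{k+1})$ and write $n_k=|V_k|$. If $H(u,v)\le Cn$ with $u\in V_k$, $v\in V_l$ and $u<v$, then
\[
\gcd(u,v)\ge \frac{v}{Cn}\ge\frac{2^{\max(k,l)}}{Cn}.
\]
So each such pair is an edge of the GCD graph on $V_k\times V_l$ with threshold $D_{kl}\asymp 2^{\max(k,l)}/(Cn)$. The ratio $D_{kl}^2/(A_kB_l)$ then equals $2^{|k-l|}/(Cn)^2$ up to constants. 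Let $e_{kl}$ be the number of such pairs. We have $\sum_{k,l}e_{kl}\ge\varepsilon n^2$, and $e_{kl}\le n_kn_l$.

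The first step is to show that most of these pairs live on pairs of blocks that are both large. Applying Theorem~\ref{gwself:gcd} with $q_0=9/4$ to the pair $(V_k,V_l)$ gives
\[
e_{kl}^{9/4}\le K_0(n_kn_l)^{5/4}(Cn)^2 2^{-|k-l|}.
\]
Hence
\[
e_{kl}\le K_0^{4/9}(n_kn_l)^{5/9}(Cn)^{8/9}2^{-4|k-l|/9}.
\]
Call a block small if $n_k\le\eta n$. We sum this bound over pairs with $n_k\le\eta n$, using H\"older together with the geometric decay in $|k-l|$ (a Schur-type test; the exponent $5/9$ is handled by the convexity relation $\sum n_k^{5/9}\cdot\max$ etc.). This should give a total of $O_C(\eta^{c}n^2)$ for some $c>0$, for instance by bounding $(n_kn_l)^{5/9}\le\eta^{1/9}n^{1/9}(n_kn_l)^{1/2}\cdot(\dots)$ and applying Cauchy--Schwarz to $\sum_{k,l}(n_kn_l)^{1/2}2^{-4|k-l|/9}\ll\sum n_k=n$. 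I expect this bookkeeping to be the main obstacle: the naive exponent count gives roughly $(n_kn_l)^{5/9}n^{8/9}$, and one has to check that the powers of $n$ balance against the smallness $\eta n$. If they do not, I would instead use $q_0$ close to $2$. Choosing $\eta=\eta(C,\varepsilon)$ small, at least $\varepsilon n^2/2$ pairs lie between big blocks. There are at most $1/\eta$ big blocks, so by pigeonhole some pair $(k,l)$ of big blocks has $e_{kl}\ge\varepsilon\eta^2n^2/2$.

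For this pair, take $X_0=V_k$, $Y_0=V_l$ (ordered so that $Y_0$ is one of them), with $E_0$ the bounded-height edges. Then $\rho_0\ge\varepsilon\eta^2/2$ and $n_kn_l\ge\eta^2n^2$. The main quantity satisfies
\[
F_0\gg\rho_0^{5/2}n_kn_l\frac{2^{|k-l|}}{C^2n^2}K^{-\pi(p_0)}\gg_{C,\varepsilon}1,
\]
where the factor $2^{|k-l|}\ge1$ only helps. Proposition~\ref{mp:extraction} then produces $U\subseteq V_l$ with
\[
|U|\ge\tfrac14(F_0/K_0)^{24}\eta n\gg_{C,\varepsilon}n,
\]
lying in a rational box of area at most $(512/F_0)|U|$.

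If $|U|\ge2$, Corollary~\ref{cor:rational-box-gain} with constant $512/F_0$ gives
\[
R_V(1/2n)\ge\frac{|U|}{n}\,\kappa(512/F_0),
\]
which is bounded below in terms of $C$ and $\varepsilon$ alone. The case $|U|=1$ forces $n$ to be bounded in terms of $C$ and $\varepsilon$. In that case a single pair of height at most $Cn$ already gives $R_V(1/2n)\ge 1/(Cn^2)$, arguing as in the small case of Lemma~\ref{lem:dense rational box}. Taking the minimum of these two bounds defines $\kappa(C,\varepsilon)$.
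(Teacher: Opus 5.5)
Your proposal is correct and follows the paper's proof essentially step for step. The shared steps are: dyadic blocks; Theorem~\ref{gwself:gcd} with $q_0=9/4$, giving $e_{kl}\ll_C n^2(a_ka_l)^{5/9}2^{-4|k-l|/9}$ with $a_k=n_k/n$; discarding small blocks; pigeonholing a dense pair of large blocks; Proposition~\ref{mp:extraction}; the rational-box gain; and a single pair of bounded height when $n$ is bounded.

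The step you flagged as the main obstacle does close by the route you suggested. If $n_k\le\eta n$, then $(n_kn_l)^{5/9}\le\eta^{1/18}n^{1/9}(n_kn_l)^{1/2}$, and Schur's test gives $\sum_{k,l}(n_kn_l)^{1/2}2^{-4|k-l|/9}\le B_0\sum_k n_k=B_0n$. So the small-block contribution is $\ll_C\eta^{1/18}n^2$. The powers of $n$ balance automatically because the Green--Walker bound is homogeneous. This is exactly the paper's Cauchy--Schwarz/Young step with $f_j=a_j^{5/9}$.

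Two small remarks. First, the gain exponent is $\tfrac12-\tfrac1{q_0}$, so your fallback of moving $q_0$ toward $2$ would destroy the gain rather than help; you do not need it. Second, to make the constant uniform, apply the box lemma with $C'=\max\{1,512/f'\}$, where $f'\le F_0$ is your lower bound depending only on $C$ and $\varepsilon$, rather than with $512/F_0$ itself.
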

\begin{proof}
Let us partition $V$ into the sets $V_j=V\cap[2^j,2^{j+1})$ and write $a_j=|V_j|/n$. Note that $\sum_ja_j=1$. Let $e_{jk}$ count the number of ordered pairs of distinct speeds in $V_j\times V_k$ with reduced ratio height at most $Cn$. Thus every such pair satisfies
\[
 \gcd(u,v)\ge\frac{\max(u,v)}{Cn}
       \ge\frac{\max(2^j,2^k)}{Cn}
       >D_{jk}:=\frac{\max(2^j,2^k)}{2Cn},
\]
and we use $D_{jk}$ for the threshold. 
Now we apply the Green--Walker bound in Theorem~\ref{gwself:gcd} with $q_0=9/4$. For $r_0=1-1/q_0=5/9$, and write $L_C=(4K_0C^2)^{1/q_0}$, it gives
\begin{equation*}
 \frac{e_{jk}}{n^2}\le
 L_C\,2^{-|j-k|/q_0}(a_ja_k)^{r_0}. 
\end{equation*}
Note that the inequality is also valid if one set or the edge set is empty. Since $a_j,a_k\le1$, this gives $e_{jk}\le L_C2^{-|j-k|/q_0}n^2$. Thus pairs of dyadic intervals with widely separated indices, that is $|j-k|$ large, contain few pairs of speeds of reduced ratio height at most $Cn$.

Let $K_\ell=2^{-|\ell|/q_0}$ and let $B_0=\sum_{\ell\in\mathbb Z}K_\ell<\infty$. Set
\[
 \eta=\min\Big\{\frac12, \Big(\frac{\varepsilon}{2L_CB_0}\Big)^{18}\Big\}.
\]
Then $0<\eta<1$ and $2L_CB_0\eta^{1/18}\le\varepsilon$. We call $V_j$ small if $a_j<\eta$, and otherwise $V_j$ is large. Let $f_j=a_j^{r_0}$ and $f_j^{\rm sm}=f_j\mathbbm1_{a_j<\eta}$. By the choice of parameter, $2r_0=10/9$, and hence
\[
 \|f\|_2^2=\sum_ja_j^{10/9}\le1,
 \qquad\text{and}\qquad 
 \|f^{\rm sm}\|_2^2\le\eta^{1/9}\sum_ja_j\le\eta^{1/9}.
\]
Now we write $(K*f)_j=\sum_\ell K_\ell f_{j-\ell}$. Then by Cauchy--Schwarz and the convolution inequality we have 
\[
\|K*f\|_2\le\sum_\ell K_\ell\|f\|_2=B_0\|f\|_2.
\]
The number of ordered pairs counted by the $e_{jk}$ with at least one speed in a small set is at most
\[
 2L_C n^2\sum_jf_j^{\rm sm}(K*f)_j
 \le2L_CB_0\eta^{1/18} n^2\le\varepsilon n^2.
\]
The above convolution bound is the triangle inequality applied to the sequences $f$.

Each unordered pair gives two ordered pairs since its speeds are distinct. Thus there are at least $2\varepsilon n^2$ ordered pairs of reduced ratio height at most $Cn$. At least $\varepsilon n^2$ remain with both speeds in large sets. Since the sets $V_j$ are disjoint and each large set contains at least $\eta n$ speeds, there are at most $1/\eta$ large sets. Therefore there are large sets $V_i,V_j$ such that 
\[
 e_{jk}\ge\varepsilon\eta^2n^2,
 \qquad \text{and}
 \qquad \rho_{jk}:=e_{jk}/(|V_j||V_k|) \ge\varepsilon\eta^2.
\]

We can now apply Proposition~\ref{mp:extraction} to $V_j,V_k$ and the threshold $D_{jk}$. The quantity $F_0$ in its statement satisfies
\[
 F_0\ge f':=\frac{(\varepsilon\eta^2)^q\eta^2}{4C^2}K^{-\pi(p_0)}>0,
\]
since $D_{jk}^2/(2^j2^k)=2^{|j-k|}/(4C^2n^2) \ge1/(4C^2n^2)$. We obtain a subset $U\subset V$ of size $m\geq\theta n$ with
\[
\theta=\frac\eta4(f'/K_0)^{24}>0,
\]
in a rational box of area at most $C'm$, where $C'=\max\{1, 2^9/f'\}$. Set $n_0=\lceil2/\theta\rceil$. If $n\ge n_0$, then $m\ge2$. Lemma~\ref{lem:dense rational box} and the transfer in Lemma~\ref{lem:Type I superadditive} give
\[
 R_V(1/2n)\ge\frac mn\kappa(C') \ge\theta\kappa(C')>0.
\]
For $2\le n<n_0$, the hypothesis ensures at least one pair of reduced ratio height at most $Cn$. Its common center intersections have total measure at least $1/(Cn^2)\ge1/(Cn_0^2)$, and this is bounded above by the Type I loss. Hence the theorem holds for every $n\ge2$ with
\[
 \kappa(C,\varepsilon)=\min\{\theta\kappa(C'),
                                      1/(Cn_0^2)\},
\]
as desired. 
\end{proof}

\section{Proof of the main theorem}\label{sec:main proof}

We now combine the two parts of the argument. If many pairs of speeds have bounded reduced ratio, the arithmetic result already gives a positive uncovered measure from Type~I losses. Otherwise the pair estimates are nearly uniform, and we can apply the dilation spillover inequality for Type II losses.

\begin{proof}[Proof of Theorem~\ref{thm:main}]
Choose
\[
  \kappa_0=\min\{\kappa(64,1/64),5/96\},
\]
where $\kappa(C,\varepsilon)$ is the constant in Theorem~\ref{mp:linear Type I}. Let 
\[
                    \delta_0=\frac1{2n},\qquad\text{and}\qquad
                    \delta_1=\frac3{4n}. 
\]
By Lemma~\ref{lem: base identities}, $g(V)\ge R_V(\delta_0)$. Thus, if $R_V(\delta_0)\ge\kappa(64,1/64)$, the desired measure bound follows immediately.

We may therefore assume that $R_V(\delta_0)<\kappa(64,1/64)$. Theorem~\ref{mp:linear Type I} implies that fewer than $n^2/64$ unordered pairs satisfy $H(u,v)\le64n$. Let $e$ be the number of such ordered pairs, so $e<n^2/32$. Every remaining pair contributes at least $(1-1/32)/n^2$ to $Q_V(\delta_0)$, by Lemma~\ref{lem: type ii}. The other contributions are nonnegative.
Hence
\begin{equation}\label{eq:baseline pair lower}
 Q_V(\delta_0)
 \ge\left(1-\frac1{32}\right)
       \left(1-\frac1n-\frac e{n^2}\right)
 \ge1-\frac1n-\frac1{16}.
\end{equation}

Next, let us compare the Type II overlaps. Write $w_j=w_V(\delta_j)$. We have $w_0=g(V)$. At radius $\delta_1$, the sum of the Bohr set measures is $3/2$, while their union has measure at most one. It follows that $w_1\ge1/2$. Since $n\ge2$, both radii are less than $1/2$, and hence Proposition~\ref{spill:pair spillover} applies and gives
\[
         Q_V(\delta_1) \ge\frac32Q_V(\delta_0)+1-3g(V).
\]
On the other hand, the upper bound in Lemma~\ref{lem: type ii} gives
\[
                 Q_V(\delta_1)\le\frac94\left(1-\frac1n\right).
\]
Combining these two estimates with \eqref{eq:baseline pair lower}, we obtain
\[
 3g(V)\ge1-\frac34\left(1-\frac1n\right)-\frac3{32}
                                                    \ge\frac5{32}.
\]
Thus $g(V)\ge5/96\ge\kappa_0$, proving the measure bound in the
remaining case.

It remains to deduce the improvement in maximum loneliness $\ML(V)$. Increase the radius from $\delta_0$ to
\[
 \delta=\frac{1+\kappa_0/2}{2n}.
\]
The sum of the added interval lengths is $2n(\delta-\delta_0)=\kappa_0/2$. The union can increase by no more than this amount, so its complement still has measure at least $\kappa_0/2>0$. Any element $t$ in this complement satisfies $\|vt\|>\delta$ for every $v\in V$. Taking $c=\kappa_0/2$ proves the theorem.
\end{proof}

\bibliographystyle{amsalpha}
\bibliography{reference}
\end{document}